\title
{Non-fringe subtrees in conditioned Galton--Watson trees}
\date{6 March 2018}
\author{Xing Shi Cai}
\author{Svante Janson}
\thanks{Partly supported by the Knut and Alice Wallenberg Foundation}
\address{Department of Mathematics, Uppsala University, PO Box 480,
SE-751~06 Uppsala, Sweden}
\email{\href{mailto:xingshi.cai@math.uu.se}{xingshi.cai@math.uu.se},
    \href{mailto:svante.janson@math.uu.se}{svante.janson@math.uu.se}}
\urladdr{\url{http://www.math.uu.se/svante-janson}}
\subjclass[2010]{} 
\numberwithin{equation}{section}
\renewcommand\le{\leqslant}
\renewcommand\ge{\geqslant}
\theoremstyle{plain}
\newtheorem{theorem}{Theorem}[section]
\newtheorem{lemma}[theorem]{Lemma}
\theoremstyle{definition}
\newtheorem{example}[theorem]{Example}
\newtheorem{remark}[theorem]{Remark}
\theoremstyle{remark}
\newenvironment{romenumerate}[1][-10pt]{
\addtolength{\leftmargini}{#1}\begin{enumerate}
 }{\end{enumerate}}
\newcounter{oldenumi}
{\setcounter{oldenumi}{\value{enumi}}
\begin{romenumerate} \setcounter{enumi}{\value{oldenumi}}}
{\end{romenumerate}}
\newcounter{thmenumerate}
\newcounter{xenumerate}   
\newcommand{\refT}[1]{Theorem~\ref{#1}}
\newcommand{\refL}[1]{Lemma~\ref{#1}}
\newcommand{\refR}[1]{Remark~\ref{#1}}
\newcommand{\refS}[1]{Section~\ref{#1}}
\newcommand{\refSs}[1]{Sections~\ref{#1}}
\newcommand{\refSS}[1]{Section~\ref{#1}}
\newcommand{\refSSs}[1]{Sections~\ref{#1}}
\xdef\klockan{\the\count1.0\the\count255}
\xdef\klockan{\the\count1.\the\count255}\fi
\newcommand{\sumno}{\sum_{n=0}^\infty}
\newcommand{\sumk}{\sum_{k=1}^\infty}
\newcommand{\sumn}{\sum_{n=1}^\infty}
\newcommand\set[1]{\ensuremath{\{#1\}}}
\newcommand\xpar[1]{(#1)}
\newcommand\bigpar[1]{\bigl(#1\bigr)}
\newcommand\Bigpar[1]{\Bigl(#1\Bigr)}
\newcommand\lrpar[1]{\left(#1\right)}
\def\rompar(#1){\textup(#1\textup)}    
\def\xexp(#1){e^{#1}}
\newcommand\ntoo{\ensuremath{{n\to\infty}}}
\newcommand\mtoo{\ensuremath{{m\to\infty}}}
\newcommand\upto{\nearrow}
\newcommand\punkt{.\spacefactor=1000}    
\newcommand\iid{i.i.d\punkt}    
\newcommand\eg{e.g\punkt}
\newcommand\cf{cf\punkt}
\newcommand{\aex}{a.e\punkt}
\newcommand{\tend}{\longrightarrow}
\newcommand\dto{\overset{\mathrm{d}}{\tend}}
\newcommand\pto{\overset{\mathrm{p}}{\tend}}
\newcommand\eqd{\overset{\mathrm{d}}{=}}
\newcommand\bbR{\mathbb R}
\newcommand\bbQ{\mathbb Q}
\newcommand\bbZ{\mathbb Z}
\newcounter{CC}
\newcommand{\CC}{\stepcounter{CC}\CCx} 
\newcommand{\CCx}{C_{\arabic{CC}}}     
\newcommand{\CCdef}[1]{\xdef#1{\CCx}}     
\newcommand{\CCname}[1]{\CC\CCdef{#1}}    
\newcounter{cc}
\newcommand\E{\operatorname{\mathbb E{}}}
\newcommand\e[1]{\operatorname{\mathbb E}\left[#1\right]}
\renewcommand\P{\operatorname{\mathbb P{}}}
\newcommand\Var{\operatorname{Var}}
\newcommand\Cov{\operatorname{Cov}}
\newcommand\Bi{\operatorname{Bi}}
\newcommand\ga{\alpha}
\newcommand\gb{\beta}
\newcommand\gd{\delta}
\newcommand\gam{\gamma}
\newcommand\gl{\lambda}
\newcommand\gss{\sigma^2}
\newcommand\cD{\mathcal D}
\newcommand\cE{\mathcal E}
\newcommand\cF{\mathcal F}
\newcommand\cG{\mathcal G}
\newcommand\cR{{\mathcal R}}
\newcommand\cS{{\mathcal S}}
\newcommand\cT{{\mathcal T}}
\newcommand\cX{{\mathcal X}}
\newcommand\cZ{{\mathcal Z}}
\newcommand\qw{^{-1}}
\newcommand\qqw{^{-1/2}}
\newcommand{\pgf}{probability generating function}
\newcommand\lhs{left-hand side}
\newcommand\rhs{right-hand side}
\newcommand\GW{Galton--Watson}
\newcommand\GWt{\GW{} tree}
\newcommand\cGWt{conditioned \GW{} tree}
\newcommand\rot{o}
\newcommand\ax[1]{a^{(#1)}}
\newcommand\ai{\ax1}
\newcommand\aii{\ax2}
\newcommand\bx[1]{b^{(#1)}}
\newcommand\bi{\bx1}
\newcommand\cFF{\widehat{\cF}}
\newcommand\FF{\widehat{F}}
\newcommand\dv[1]{D_+{#1}}
\newcommand\gamx{\gam^*}
\newcommand\gamxx{\gam}
\newcommand\gamy{\gam^{**}}
\newcommand\gamz{\zeta}
\newcommand\cemp{\cE_m^+}
\newcommand\cemm{\cE_m^-}
\newcommand\cxmp{\cX_m^+}
\newcommand\cxmm{\cX_m^-}
\newcommand\nmp{n_m^+}
\newcommand\nmm{n_m^-}
\newcommand\xm{X^-}
\newcommand\xp{X^+}
\newcommand\tta{T_a}
\newcommand\ttb{T_b}
\newcommand\ctn{\cT_n}
\newcommand\gf{\phi}
\newcommand\tphi{\tilde\phi}
\newcommand\ellm{_{m,\ell}}
\newcommand\mux{\mu_1}
\newcommand\gssx{\sigma_1^2}
\newcommand*{\Cdot}{\bullet} 
\definecolor{brightmaroon}{rgb}{0.76, 0.13, 0.28}
\definecolor{maroon}{rgb}{0.5, 0, 0}
\definecolor{webgreen}{rgb}{0, 0.5, 0}
\newcommand\myseqii[1]{(#1)_{i \ge 0}}
\newcommand\myseqi[1]{(#1_{i})_{i \ge 0}}
\newcommand\p[1]{\mathbb{P}\left(#1\right)}
\newcommand\ctns{\cT_{n}^{[s]}}
\newcommand\sF{F}
\newcommand\tdF{{F}}
\begin{document}

\begin{abstract}
    We study \(S(\cT_{n})\), the number of subtrees in a conditioned Galton--Watson tree of size
    \(n\). With two very different methods, we show that \(\log(S(\cT_{n}))\) has a Central Limit
    Law and that the moments of \(S(\cT_{n})\) are of exponential scale. 
\end{abstract}

\maketitle

We define the model which we study in \refS{Sdef}. Our main results are given in Section
\ref{S:intro}; the proofs can be found in Sections \ref{SpfLTN}, \ref{sec:mmt} and \ref{sec:gen}
respectively.  An extension is given in Section \ref{sec:size}.

\section{Definitions}\label{Sdef}

\subsection{Subtrees}

We consider only rooted trees.
We denote the node set of a rooted tree $T$ by $V(T)$, and the number of
nodes by $|T|=|V(T)|$.
We denote the root of $T$ by $\rot=\rot(T)$.
We regard the edges of a rooted tree as directed away from the root.

A \emph{(general) subtree} of a rooted tree $T$ is 
a subgraph $T'$ that is a tree. 
$T'$ is necessarily an induced subgraph, so
we may identify it with its node set $V'=V(T')$;
hence we can also define a 
subtree as any set of nodes that
forms a tree; in other words, any non-empty connected subset $V'$ 
of the node set
$V(T)$. 

Note that a  subtree $T'$ of $T$ has a unique node $\rot'$ of
smallest depth in $T$, and that all edges in $T'$ are directed away from
$\rot'$. We define $\rot'$ to be the root of $T'$.
Thus every  subtree $T'$ is itself a rooted tree, with the direction of
any edge agreeing with the direction in $T$.

A \emph{fringe subtree} is a subtree $T'$ that contains all children of any
node in it, i.e., if $v\in V'=V(T')$ then $w\in V'$ for every child $w$ of
$v$.
Equivalently, a fringe subtree is the tree $T_v$ consisting of all
descendants (in $T$) of some node $v\in V(T)$ (which becomes the root of
$T_v$). Hence the number of fringe subtrees of $T$ equals the number of
nodes of $T$.

Fringe subtrees are studied in many papers; often they are simply called
\emph{subtrees}.
 To avoid confusion,
we call the general subtrees studied in the
present paper \emph{non-fringe subtrees}.
(This is a minor abuse of notation, since 
fringe subtrees are examples of  non-fringe subtrees; the name 
should be interpreted as 
``not necessarily fringe''.)

A \emph{root subtree} of a rooted tree $T$ is non-fringe subtree $T'$ that
contains the root $o(T)$ (which then becomes the root of $T'$ too).
Equivalently, a root subtree is a non-empty 
set $V'\subseteq V(T)$ such that if $v\in V'$, then the parent of $v$ also
belongs to $V'$.

Let $\cS(T)$ be the set of non-fringe subtrees of $T$, and $\cR(T)$ the subset of 
root subtrees.
Let $S(T):=|\cS(T)|$ be the number of non-fringe subtrees of $T$, 
and $R(T):=|\cR(T)|$ the number of root subtrees.

Note that a non-fringe subtree of $T$ is a root subtree of a unique fringe
subtree $T_v$. Hence,
\begin{equation}\label{SR}
  S(T)=\sum_{v\in T} R(T_v).
\end{equation}
Furthermore, for any $v\in T$, $R(T_v)\le R(T)$, since we obtain an
injective map $\cR(T_v)\to\cR(T)$ by adding to each tree $T'\in\cR(T_v)$ the
unique path from $o$ to $v$. Consequently, using \eqref{SR},
\begin{equation}
  \label{SR2}
R(T)\le S(T) \le |T|\cdot R(T),
\end{equation}

\subsection{Conditioned Galton--Watson trees}

A \GW{} tree \(\cT\) is a tree in which each node is given a random number of child nodes, where the
numbers of child nodes are drawn independently from the same distribution \(\xi\) which is often
called the \emph{offspring distribution}.  
(We use $\xi$ to denote both the offspring distribution and a random
variable with this distribution.)
\GW{} trees were implicitly
introduced by \citet{bienayme} and
\citet{watson1875} for modeling the evolution of populations.

A conditioned \GW{} tree \(\ctn\) is a \GW{} tree conditioned on having size \(n\).  It is
well-known that \(\ctn\) encompasses many random tree models. For example, if
\(\p{\xi=i}=2^{-i-1}\), i.e., \(\xi\) has geometric \(1/2\) distribution, then \(\ctn{}\) is a
uniform random tree of size \(n\).  
Similarly, if \(\p{\xi=0}=\p{\xi=2}=1/2\), then \(\ctn{}\) is a
uniform random full binary tree of size \(n\).

As a result, the properties of \(\ctn\) has been well-studied. See,
\eg{}, \cite{SJ264} and the references there.
For fringe and non-fringe subtrees of \(\ctn\), see
\cite{SJ285, chyzak08, Cai2016Phd, cai16}.

\subsection{Simply generated trees}  
\label{sec:simple}

Let \(\myseqi{w}\) be a given sequence of nonnegative numbers, with $w_0>0$.
For a tree \(T\), 
let \(\dv(v)\) be the out-degree (number of children) of a node \(v\in T\),
and
define the \emph{weight} of
\(T\) by
\begin{equation} \label{weight}
    w(T) = \prod_{v \in T} w_{\dv(v)}.
\end{equation}
Let \(\ctns\) be a tree chosen at random from all ordered
trees of size \(n\) with probability proportional to
their weights. In other words,
\begin{equation} \label{simply}
    \p{\ctns = T} = 
    \frac{
        w(T)
    }{
        \sum_{T:|T|=n} w(T)
    }
    .
\end{equation}
We call \(\ctns\) a \emph{simply generated tree} with 
weight sequences \(\myseqi{w}\), and the generating function
\begin{equation}\label{gen:phi}
    \Phi(z) \coloneqq \sum_{i \ge 0} w_{i} z^{i}.
\end{equation}
its \emph{generator}.

Note that the \cGWt{}
\(\ctn\) with the offspring distribution \(\xi\) is 
the same as the
\(\ctns{}\) with the
weight sequence \(\myseqii{\p{\xi=i}}\).  In this case, the generator \(\Phi(z)\) is just the
probability generating function of \(\xi\).  
Hence, simply generated trees generalize conditioned \GW{} trees. 
On the other hand, given 
a sequence $(w_i)$ with generator $\Phi(z)$, any sequence with a generator
$a\Phi(bz)$ with $a,b>0$ yields the same \(\ctns\), 
and in many cases $a$ and $b$ can be chosen such that 
the new generator is a probability generating function, and then $\ctns$ is
a \cGWt. 
Consequently, simply generated trees and conditioned \GWt{s} are
essentially the same, and  we use in the sequel the notation \(\ctn\) for
both.
In particular,
see, \eg{}, \cite[Section 4]{SJ264},
a simply generated tree with generator
$\Phi(z)$ is equivalent to a \cGWt{} with offspring distribution
$\xi$ satisfying $\E\xi=1$ and $\E e^{t\xi}<\infty$ for some $t>0$,
if and only if
$\Phi(z)$ has a positive radius of convergence $R\in(0,\infty]$ and
    \begin{equation}
        \label{philm1}
        \lim_{z \upto R} 
        \frac{
            z \Phi'(z)
        }{
            \Phi(z)
        }
>1 
        .
    \end{equation}
Although the two formulations are equivalent under our conditions,
the formulation with simply generated trees is sometimes more convenient,
since it gives more flexibility in choosing a convenient $\Phi$; see for
example \refSS{sec:gf}.

For more on the connection between the two models,
see \cite[pp.~196--198]{Flajolet2009} and
\cite[Sections 2 and 4]{SJ264}.

\subsection{Some further notation}

If $v$ and $w$ are nodes in a tree $T$, then $v\prec w$ denotes that $v$ is
ancestor of $w$. 

We denote \(T'\) being a non-fringe (general) subtree of \(T\) by \(T' \subseteq T\) and \(T'\)
being a root subtree of \(T\) by \(T' \subseteq_{r} T\).

For a formal power series \(f(z)\coloneqq\sum_n{f_{n}} z^{n}\), we let \([z^{n}] f(z)\coloneqq f_n\).

\section{Main results}\label{S:intro}

We give two types of results in this paper, proved by two different methods.
First,
both $R(\ctn)$ and $S(\ctn)$ have an asymptotic log-normal
distribution, as conjectured by Luc Devroye (personal communication).

\begin{theorem}\label{TLN}
    Let $\ctn$ be a random conditioned \GWt{} of order $n$, 
    defined by some offspring distribution
    $\xi$ with $\E \xi=1$ and $0<\Var\xi<\infty$.
    Then there exist constants $\mu,\gss>0$ such that, as \ntoo,
    \begin{align}
        \frac{\log R(\ctn) - \mu n}{\sqrt n} &\dto N(0,\gss), \label{TLNR}
        \\
        \frac{\log S(\ctn) - \mu n}{\sqrt n} &\dto N(0,\gss),\label{TLNS}
    \end{align}
    where \(N(0,\gss)\) denotes the normal distribution with mean \(0\) and variance \(\gss\).
    Furthermore,
    \begin{align}
        \E [\log R(\ctn)]&=\E [\log S(\ctn)]+O(\log n) = n\mu + o\bigpar{\sqrt n},
        \label{ElogR}
        \\
        \Var [\log R(\ctn)]&=\Var [\log S(\ctn)]+o(n) = n\gss + o\xpar{ n}.
        \label{VlogR}
    \end{align}
\end{theorem}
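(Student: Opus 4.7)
The plan is to express $\log(1+R(T))$ as an additive functional of $T$ with bounded toll, and then apply the central limit theorem for such functionals on conditioned \GWt{s} from \cite{SJ285}. Writing $o$ for the root of $T$ and $c_1,\dots,c_k$ for its children, every root subtree is specified by choosing, independently for each $i$, either nothing or a root subtree of $T_{c_i}$; hence
\begin{equation*}
    R(T)=\prod_{i=1}^{k}\bigpar{1+R(T_{c_i})}.
\end{equation*}
Setting $f(T'):=\log\bigpar{1+1/R(T')}\in(0,\log 2]$, a straightforward induction on $|T'|$ using this recursion gives
\begin{equation*}
    F(T):=\sum_{v\in V(T)} f(T_v) = \log\bigpar{1+R(T)},
\end{equation*}
so $F(T)-\log R(T)=f(T)$ is bounded by $\log 2$ and $F$ is a bounded additive functional.

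The key step is to invoke the CLT for bounded additive functionals of \cGWt{s} from \cite{SJ285}: under $\E\xi=1$ and $0<\Var\xi<\infty$, there exist $\mu:=\E f(\cT)$, with $\cT$ the unconditioned \GWt{}, and $\gss\ge 0$ such that
\begin{equation*}
    \frac{F(\ctn)-n\mu}{\sqrt n}\dto N(0,\gss), \qquad \E F(\ctn)=n\mu+o(\sqrt n), \qquad \Var F(\ctn)=n\gss+o(n).
\end{equation*}
Because $F(\ctn)-\log R(\ctn)$ is bounded, these three statements transfer verbatim to $\log R(\ctn)$ with the same $\mu$ and $\gss$ (by Slutsky for the CLT, and by a trivial computation together with \CS{} for the mean and variance). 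This yields \eqref{TLNR} and the $R$ parts of \eqref{ElogR}--\eqref{VlogR}. Strict positivity $\mu>0$ is immediate from $f>0$.

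The transfer $R\to S$ uses \eqref{SR2}, which gives $0\le\log S(\ctn)-\log R(\ctn)\le\log n$. Slutsky then yields \eqref{TLNS} from \eqref{TLNR}; the $L^\infty$ bound gives the $O(\log n)$ discrepancy in \eqref{ElogR}; and \CS{} together with $\Var\log R(\ctn)=O(n)$ gives $|\Var\log S(\ctn)-\Var\log R(\ctn)|=O(\sqrt n\log n)=o(n)$, which is the $S$ statement of \eqref{VlogR}.

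The principal obstacle will be proving the non-degeneracy $\gss>0$. The variance formula of \cite{SJ285} vanishes only in an exceptional situation in which $f$ is essentially an affine function of the root degree modulo an additive coboundary on fringe subtrees. I expect to rule this out by evaluating $f$ explicitly on a handful of small trees (leaf, path of length one, cherry, etc.), where the values $\log 2$, $\log\tfrac32$, $\log\tfrac43,\dots$ cannot be arranged to satisfy any such linear relation.
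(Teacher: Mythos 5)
Your overall strategy is the same as the paper's: write $F(T)=\log(1+R(T))$ as an additive functional with toll $f(T)=\log(1+1/R(T))$, apply Janson's CLT for additive functionals (Theorem 1.5 of \cite{SJ285}), and transfer from $R$ to $S$ via \eqref{SR2}. The recursion for $R$, the identity $F=\sum_v f(T_v)$, the value $\mu=\E f(\cT)>0$, and the $R\leftrightarrow S$ transfer all match the paper. However, there are two gaps, one easily repaired and one substantial.

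\textbf{A repairable gap: the bound on the toll.} You invoke \cite[Theorem 1.5]{SJ285} on the strength of $f$ being bounded by $\log 2$. That is not enough. The variance formula $\gss=2\E\bigl(f(\cT)(F(\cT)-|\cT|\mu)\bigr)-\Var f(\cT)-\mu^2/\Var\xi$ involves $\E\bigl(f(\cT)\,|\cT|\bigr)$-type terms, and for a critical \GW{} tree $\E|\cT|=\infty$, so a merely bounded toll does not make these converge. You need decay of $f(T)$ in $|T|$. Fortunately, this is available: every path from the root to a node is a root subtree, so $R(T)\ge|T|$ and hence $0\le f(T)\le R(T)^{-1}\le|T|^{-1}$. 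That deterministic bound $|f(\ctn)|\le 1/n$ is exactly what the paper uses to verify the hypotheses of \cite[Theorem 1.5]{SJ285}; you should derive and use it.

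\textbf{The real gap: $\gss>0$.} You anticipate that degeneracy of the asymptotic variance is characterized by ``$f$ is essentially an affine function of the root degree modulo an additive coboundary,'' and that one can rule this out by evaluating $f$ on a few small trees. No such characterization is known for Theorem 1.5 of \cite{SJ285}; the paper explicitly says so, citing \cite[Remark 1.7]{SJ285}, and remarks that one generically expects $\gss>0$ but has no general criterion. Consequently the plan of ``check small trees against the degeneracy condition'' has no theorem behind it. The paper's actual proof of $\gss>0$ is a bespoke argument occupying most of the section: it first shows $R(\cT_{3\ell+1})$ is not a.s.\ constant by comparing two explicit trees $T_a,T_b$; then derives a quantitative lower bound \eqref{virgo} on how $F(T)$ changes when a fringe subtree at a node $v$ is replaced, with a Lipschitz constant controlled by $\gamy(v)$; then defines ``good'' nodes (fringe of size $3\ell+1$ and $\gamy(v)\le A$), proves via an induction on expectations (\refL{Lgood}) that $\ctn$ has $\Theta(n)$ good nodes with probability at least $1/2$; and finally conditions on the core of $\ctn$ and applies a general anti-concentration lemma (\refL{LNA}) for functions of i.i.d.\ inputs that are uniformly sensitive to each coordinate. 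Your proposal contains none of this machinery, and as written the non-degeneracy step does not go through.

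Everything else (the CLT transfer, the $O(\log n)$ discrepancy in means, the $o(n)$ discrepancy in variances via \CS{}/Minkowski) is correct and follows the paper.
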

\noindent The proof is given in \refS{SpfLTN}, and is based on a general theorem in
\cite{SJ285}. 
It is in principle possible to calculate $\mu$ and $\gss$ in \refT{TLN}, at
least numerically, see \refR{Rmugss}.

Secondly, if we also assume that \(\xi\) has a finite exponential moment
(a mild assumption satisfied by all standard examples),
then we can use
generating functions and singularity analysis to obtain asymptotics for the
mean and higher moments
of $R(\ctn)$.

\begin{theorem}\label{Tmom}
Let $\ctn$ be as in \refT{TLN}, and assume further that
$\E e^{t\xi}<\infty$ for some $t>0$.
Assume further that if $R\le\infty$ is the radius of convergence of the
probability generating function $\Phi(z):=\E z^\xi$, 
then 
$\Phi'(R):=  \lim_{z \upto R}  \Phi'(z) =\infty$.
   Then there exist sequences of numbers $\gamma_{m}>0$ and 
$1< \tau_1 < \tau_2 < \dots$
    such that for any fixed $m\ge1$, 
    \begin{equation}
        \label{eq:mmt3}
        \E R(\ctn)^m = \bigpar{1+O(n\qw)}\gamma_{m} \tau_m^n.
    \end{equation}
\end{theorem}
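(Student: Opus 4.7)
The plan is to set up the natural $m$th moment generating function
\[
F_m(z) := \sum_{T} w(T)\, R(T)^m\, z^{|T|} = \E R(\cT)^m z^{|\cT|},
\]
with $\cT$ the unconditional \GWt, and apply singularity analysis. Exploiting the multiplicative recursion $R(T) = \prod_{v}\bigl(1 + R(T_v)\bigr)$ over the children $v$ of $\rot(T)$, a short computation gives the functional equation
\[
F_m(z) = z\,\Phi\bigl( G_m(z) \bigr), \qquad
G_m(z) := \sum_{k=0}^m\binom{m}{k} F_k(z) = \E(1+R(\cT))^m z^{|\cT|},
\]
i.e.\ $F_m = z\,\Phi(H_m + F_m)$ with $H_m := \sum_{k=0}^{m-1}\binom{m}{k}F_k$ determined by the lower moments. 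Note $F_0$ is the usual size generating function.

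I would then prove by induction on $m$ the combined statement: $F_m$ is analytic in $\{|z|<\tau_m\qw\}$, has a unique dominant singularity at $\tau_m\qw$ of square-root type, extends to a Flajolet--Sedgewick slit neighbourhood of $\tau_m\qw$, and
\[
[z^n]F_m(z) = c_m\,\tau_m^n\,n^{-3/2}\bigpar{1 + O(n\qw)}
\]
for some $c_m > 0$, with $1 = \tau_0 < \tau_1 < \tau_2 < \cdots$. The base case $m = 0$ is the classical local limit theorem for the total progeny of a critical, finite-variance \GWp. For the inductive step, I would treat $y = z\,\Phi(H_m(z)+y)$ as a smooth implicit-function scheme of Drmota--Lalley--Woods type: by the inductive hypothesis $H_m$ is analytic in $|z|<\tau_{m-1}\qw$, so wherever the branch-point equation $1 = z\,\Phi'(H_m(z)+F_m(z))$ first holds on the positive real axis inside that disc, $F_m$ develops the standard square-root singularity, and the Flajolet--Odlyzko transfer theorem delivers the asymptotics above.

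The heart of the argument---and, I expect, the main obstacle---is the strict separation $\tau_m > \tau_{m-1}$, which guarantees that the branch point actually lies where $H_m$ is analytic. I would argue by contradiction: if $F_m$ were analytic on $[0,\tau_{m-1}\qw]$, then $y^\star := G_m(\tau_{m-1}\qw)$ would be finite, and
\[
y^\star - G_{m-1}(\tau_{m-1}\qw) = \E\bigsqpar{(1+R(\cT))^{m-1}\,R(\cT)\,\tau_{m-1}^{-|\cT|}} \ge F_0(\tau_{m-1}\qw) > 0,
\]
using $R(T)\ge 1$ pointwise. By the inductive hypothesis the branch-point identity $\tau_{m-1}\qw\,\Phi'(G_{m-1}(\tau_{m-1}\qw)) = 1$ holds, and strict convexity of $\Phi$ (from $\Var\xi > 0$) upgrades $y^\star > G_{m-1}(\tau_{m-1}\qw)$ to $\tau_{m-1}\qw\,\Phi'(y^\star) > 1$. (The assumption $\Phi'(R)=\infty$ forces $y^\star < R$, since otherwise $F_m(\tau_{m-1}\qw) = \tau_{m-1}\qw\,\Phi(y^\star) = \infty$.) Hence $z \mapsto 1 - z\,\Phi'(H_m(z)+F_m(z))$ is positive at $z=0$ and negative at $z = \tau_{m-1}\qw$, so by continuity it vanishes at some interior point, a branch point of $F_m$---contradicting analyticity. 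Thus $\tau_m > \tau_{m-1}$ and the inductive step closes.

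Uniqueness of the dominant singularity follows from Pringsheim's theorem (positive coefficients) together with a standard aperiodicity argument inherited from $\Phi$. Combining $[z^n]F_m(z) = c_m\tau_m^n n^{-3/2}(1 + O(n\qw))$ with the $m=0$ asymptotic $[z^n]F_0(z) = c_0\, n^{-3/2}(1 + O(n\qw))$, the ratio yields
\[
\E R(\ctn)^m = \frac{[z^n]F_m(z)}{[z^n]F_0(z)} = \gamma_m\,\tau_m^n\bigpar{1 + O(n\qw)},\qquad \gamma_m := c_m/c_0,
\]
which is \eqref{eq:mmt3}.
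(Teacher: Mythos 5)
Your proposal follows essentially the same route as the paper's proof: the same partially-coloured generating functions $F_m$, the same functional equation $F_m=z\Phi\bigpar{\sum_k\binom mk F_k}$, and an induction on $m$ establishing the strict decrease of the radii $\rho_m$ followed by square-root singularity analysis. The key comparison step — using $R(T)\ge1$ to get $\FF_m(\rho_{m-1})>\FF_{m-1}(\rho_{m-1})$, then the branch-point identity from the inductive hypothesis plus strict convexity of $\Phi$ to force $\rho_{m-1}\Phi'\bigpar{\FF_m(\rho_{m-1})}>1$, contradicting $z\Phi'(\FF_m(z))<1$ inside the disc of analyticity — is exactly the paper's argument (cf.\ \eqref{d<1} and \eqref{sno}).

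However, there is a genuine gap, and it is precisely where the extra hypothesis $\Phi'(R)=\infty$ does its real work. After establishing $\rho_m<\rho_{m-1}$, you assert that the singularity of $F_m$ at $\rho_m$ is a square-root branch point, but a priori it could instead be the point where $\FF_m(\rho_m)=R$, so that $\Psi_m(z,w)=z\Phi(w+H_m(z))$ is not analytic at $(\rho_m,s_m)$ and the Drmota--Lalley--Woods machinery does not apply. This is case \ref{xb} in the paper's proof, and it must be ruled out separately; the paper's counter-example in \refSS{SScounter} (where $\Phi'(R)<\infty$) shows that this failure mode is real and yields a non-square-root singularity. Your parenthetical remark is aimed in this direction, but it is both misplaced (it concerns $G_m(\rho_{m-1})$ inside the contradiction argument, not $\FF_m(\rho_m)$ at the actual singularity) and incorrect as stated: $\Phi'(R)=\infty$ does not imply $\Phi(R)=\infty$ — e.g.\ $\Phi(z)=a+(1-a)\zeta(2)^{-1}\sum_k z^k/k^2$ has $\Phi(1)=1$ but $\Phi'(1)=\infty$ — so you cannot conclude $F_m(\rho_{m-1})=\infty$ from $y^\star=R$. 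What does work, and closes the gap cleanly, is this: if $\FF_m(\rho_m)=R$, then $\lim_{z\upto\rho_m}z\Phi'(\FF_m(z))=\rho_m\Phi'(R)=\infty$, contradicting the universal bound $z\Phi'(\FF_m(z))<1$ for $z<\rho_m$ that you already derived from the differentiated implicit equation. With that one extra sentence in place, your inductive step is complete and matches the paper.
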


We will later use the formulation of simply generated trees. In this
language, \refT{Tmom} has the following, equivalent, formulation.

\begin{theorem}\label{Tmom2} 
Let $\ctn$ be a simply generated tree with generator $\Phi(z)$.
Let \(R \le \infty\) be the radius of convergence of \(\Phi(z)\).
Assume that $R>0$ and that
\begin{gather}
    \label{philm}
    \lim_{z \upto R} 
    \frac{
        z \Phi'(z)
    }{
        \Phi(z)
    }
    > 
    1
    ,
\\
\Phi'(R):=  \lim_{z \upto R}  \Phi'(z) 
=\infty
\label{philmx}.
\end{gather}
Then \eqref{eq:mmt3} holds.
\end{theorem}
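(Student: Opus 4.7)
The plan is to use generating functions and singularity analysis (the Flajolet--Odlyzko paradigm). The key identity is the recursion $R(T) = \prod_{i=1}^{d}(1 + R(T_i))$ for a tree $T$ whose root has children with subtrees $T_1,\ldots,T_d$: each root subtree either skips the $i$-th child or extends into a root subtree of $T_i$. Expanding $(1+R(T_i))^m$ by the binomial theorem and summing over simply generated trees gives, for $A_m(z) := \sum_T w(T) R(T)^m z^{|T|}$ and $B_m(z) := \sum_{k=0}^{m-1}\binom{m}{k} A_k(z)$,
\[
A_m(z) = z\,\Phi\bigpar{A_m(z) + B_m(z)},
\]
with $A_0 = Y$ satisfying the classical equation $Y = z\Phi(Y)$. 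Under \eqref{philm}, $Y$ has a unique dominant square-root singularity at $\rho_0 = \sigma_0/\Phi(\sigma_0)$, where $\sigma_0 \in (0,R)$ is the unique solution of $\sigma_0\Phi'(\sigma_0) = \Phi(\sigma_0)$; note $\rho_0 = 1/\Phi'(\sigma_0)$.

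I would then prove by induction on $m$ that $A_m$ has a unique dominant singularity at some $\rho_m$ with $\rho_m < \rho_{m-1}$, extends analytically to a $\Delta$-domain at $\rho_m$, and admits an expansion $A_m(z) = a_m - c_m\sqrt{1 - z/\rho_m} + O(1 - z/\rho_m)$ with $c_m > 0$. For the inductive step, set $V_m := A_m + B_m$, so $V_m = z\Phi(V_m) + B_m(z)$ with $B_m$ analytic on a $\Delta$-domain at $\rho_{m-1}$ by induction. The candidate dominant singularity of $A_m$ is the branch point of this implicit equation, where $z\Phi'(V_m) = 1$. Parameterizing by $\sigma := V_m$ at the branch point and setting $\rho := 1/\Phi'(\sigma)$, the branch point condition reduces to
\[
P(\sigma) := \sigma - \Phi(\sigma)/\Phi'(\sigma) = B_m\bigpar{1/\Phi'(\sigma)} =: Q_m(\sigma).
\]
Here $P$ is strictly increasing on $(0,R)$ since $P' = \Phi\Phi''/(\Phi')^2 > 0$; $Q_m$ is strictly decreasing on $(\sigma_{m-1},R)$ since $1/\Phi'(\sigma)$ is decreasing and $B_m$ is increasing on the positive real axis; and at $\sigma = \sigma_{m-1}$ the inductive branch point relation $P(\sigma_{m-1}) = B_{m-1}(\rho_{m-1})$ together with $B_m - B_{m-1} \ge m A_{m-1} > 0$ yields $P(\sigma_{m-1}) < Q_m(\sigma_{m-1})$. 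The hypothesis \eqref{philmx} forces $1/\Phi'(\sigma) \to 0$ as $\sigma \upto R$, hence $Q_m(\sigma) \to 0$, whereas $P$ keeps increasing; so the intermediate value theorem produces a unique $\sigma_m \in (\sigma_{m-1},R)$ solving $P(\sigma_m) = Q_m(\sigma_m)$, and we set $\rho_m := 1/\Phi'(\sigma_m) < \rho_{m-1}$.

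At $(\rho_m,\sigma_m)$, a standard application of the Weierstrass preparation theorem (using $\Phi''(\sigma_m) > 0$ and $\Phi(\sigma_m) + B_m'(\rho_m) \ne 0$) yields the claimed square-root expansion for $V_m$, which transfers to $A_m = V_m - B_m$ since $B_m$ is analytic at $\rho_m < \rho_{m-1}$. The Flajolet--Odlyzko transfer theorem then gives
\[
[z^n] A_m(z) = \frac{c_m}{2\sqrt{\pi}}\,\rho_m^{-n}\,n^{-3/2}\bigpar{1 + O(n\qw)},
\]
and dividing by the analogous asymptotic for $[z^n] A_0(z)$ yields \eqref{eq:mmt3} with $\gamma_m := c_m/c_0 > 0$ and $\tau_m := \rho_0/\rho_m > 1$; strict monotonicity of $(\rho_m)$ gives strict monotonicity of $(\tau_m)$.

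The main obstacle is the inductive location of the branch point: at each level one must ensure that the branch point of the implicit equation for $V_m$ lies strictly inside the disk of analyticity of $B_m$, so that $A_m$ inherits a clean square-root singularity at $\rho_m$ rather than being controlled by the singularity of $B_m$ at $\rho_{m-1}$. This is exactly where \eqref{philmx} is essential --- without $\Phi'(R) = \infty$, the quantity $1/\Phi'(\sigma)$ would be bounded below by $1/\Phi'(R) > 0$ and, for sufficiently large $m$, the branch point equation $P(\sigma) = Q_m(\sigma)$ could fail to admit a solution with $\rho_m < \rho_{m-1}$, breaking the uniform form of the theorem.
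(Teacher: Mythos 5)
Your proposal is correct and follows the same overall strategy as the paper: write $F_m(z) = \sum_T w(T)R(T)^m z^{|T|}$ (your $A_m$), derive the recursion $F_m = z\Phi(F_m + H_m)$ with $H_m = \sum_{k<m}\binom{m}{k}F_k$ known by induction (your $B_m$), locate a square-root branch point of the resulting smooth implicit-function schema at some $\rho_m < \rho_{m-1}$, and finish with Flajolet--Sedgewick singularity analysis and the ratio formula $\E R(\ctn)^m = [z^n]F_m/[z^n]F_0$. The two proofs differ in how $\rho_m$ is located and how \eqref{philmx} is used. The paper proceeds by elimination: the singularity of $F_m$ at its radius of convergence $\rho_m$ must be caused either by $H_m$ (which would force $\rho_m\ge\rho_{m-1}$), by $\Phi$ hitting its own radius (which would force $\FF_m(\rho_m)=R$), or by the characteristic condition $\partial\Psi_m/\partial w(\rho_m,s_m)=1$; the first alternative is ruled out by comparing $\partial\Psi_m/\partial w$ near $\rho_{m-1}$ against the inductive characteristic identity, and the second by bounding $\FF_m/F_m\le 2^m$ and reducing to a contradiction with $\lim_{\zeta\to R}\zeta\Phi'(\zeta)/\Phi(\zeta)$, invoking \eqref{philmx} when $\Phi(R)<\infty$. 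You instead construct the characteristic point directly: parameterizing by $\sigma=\FF_m(\rho_m)$, the system becomes $P(\sigma)=Q_m(\sigma)$ with $P(\sigma)=\sigma-\Phi(\sigma)/\Phi'(\sigma)$ increasing and $Q_m(\sigma)=H_m(1/\Phi'(\sigma))$ decreasing, and you use \eqref{philmx} to drive $Q_m(\sigma)\to 0$ as $\sigma\upto R$, so the IVT produces a crossing in $(\sigma_{m-1},R)$. Both arguments are valid; yours is arguably more constructive about where $\rho_m$ sits. One small caveat: your write-up constructs a \emph{candidate} characteristic point and then asserts the square-root expansion, but it does not explicitly verify that this candidate equals the true radius of convergence of $F_m$ (i.e., that neither $\FF_m$ reaches $R$ nor $H_m$ goes singular at some smaller $z$). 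This follows from the positivity of the schema and the inequalities you already established ($\rho_m<\rho_{m-1}$ and $\sigma_m<R$) via \cite[Theorem VII.3]{Flajolet2009}, but it should be said explicitly, since it is exactly what the paper's elimination argument handles head-on.
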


The proof of Theorems \ref{Tmom}--\ref{Tmom2} is given in
Section \ref{sec:mmt}.
 We first (\refSSs{sec:gf}--\ref{sec:sing})
illustrate the argument
by studying the simple case of full binary trees, where we do explicit
calculations.
(Similar explicit calculations could presumably be performed, \eg{}, for
full \(d\)-ary trees, or for ordered trees.) Then we give the proof for the
general case in \refSS{sec:mmt:proof}.
Note that the condition \eqref{philm} is the same as \eqref{philm1};
however, we need also the extra condition \eqref{philmx}. The latter
condition is a weak assumption that is satisfied in most applications, and
in particular if $R=\infty$, or if $\Phi(R)=\infty$. Nevertheless, this
extra condition (or some other) is necessary; we give in \refSS{SScounter}
an example showing that Theorems \ref{Tmom}--\ref{Tmom2} are not valid
without \eqref{philmx}.

For moments of $S(\ctn)$, we have by \eqref{SR2} the same exponential growth
$\tau_m^n$, but possibly also a polynomial factor.
 In fact, there is no such polynomial factor, and 
$\E S(\ctn)^m$ and $\E R(\ctn)^m$ differ asyptotically only by a constant
factor, as shown by the following theorem, proved in
\refS{sec:gen}.

\begin{theorem}\label{TS}
  Let $\ctn$ be as in Theorem \ref{Tmom} or \ref{Tmom2}.
Then, for any $m\ge1$,
    \begin{equation}
        \label{eq:S}
        \E S(\ctn)^m = \bigpar{1+O(n\qw)}\gamma'_{m} \tau_m^n,
    \end{equation}
where $\tau_m$ is as in \eqref{eq:mmt3} and $\gam_m'>0$. \\
More generally, for $m,\ell\ge0$,
    \begin{equation}
        \label{eq:RS}
   \E [R(\ctn)^\ell S(\ctn)^m] 
= \bigpar{1+O(n\qw)}\gamma'\ellm \tau_{\ell+m}^n,
    \end{equation}
for some $\gamma'\ellm>0$.
\end{theorem}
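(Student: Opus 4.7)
The plan is to extend the singularity-analysis machinery of \refS{sec:mmt} by working with the joint generating function
$$
f_{\ell,m}(z) := \sum_T w(T) z^{|T|} R(T)^\ell S(T)^m,
$$
so that $\E[R(\ctn)^\ell S(\ctn)^m] = [z^n] f_{\ell,m}(z) / [z^n] y(z)$. Since $f_{\ell,0}=r_\ell$, the case $m=0$ is \refT{Tmom2}. I will show by induction on $m$, for each fixed $N := \ell+m$, that $f_{\ell,m}(z)$ has the same dominant singular behaviour at $z = \rho_N := 1/\tau_N$ as $r_N(z)$, up to a nonzero multiplicative constant. The transfer theorem invoked in \refS{sec:mmt} then gives \eqref{eq:RS}, and \eqref{eq:S} is the $\ell=0$ specialisation.

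For the functional equation, I decompose $T$ into its root and its children's subtrees $T_1,\dots,T_d$ and use both $R(T) = \prod_i(1+R(T_i))$ and $S(T) = R(T) + \sum_i S(T_i)$. Expanding
$$
R(T)^\ell S(T)^m = \sum_{k=0}^m\binom{m}{k} R(T)^{\ell+k}\bigpar{\sum_i S(T_i)}^{m-k}
$$
binomially, then $\bigpar{\sum_i S(T_i)}^{m-k}$ multinomially, and summing against the root-degree weights $w_d$, I obtain, after isolating the contribution in which a single child carries all $m$ $S$-factors (the only way $f_{\ell,m}$ itself reappears linearly on the right),
$$
\bigpar{1-z\Phi'(G_\ell(z))}\, f_{\ell,m}(z) = r_{N}(z) + \sum_{k=1}^{m-1}\binom{m}{k}\, z\Phi'(G_{\ell+k}(z))\, f_{\ell+k,\, m-k}(z) + Q_{\ell,m}(z),
$$
where $G_b(z) := \sum_{j=0}^b \binom{b}{j} r_j(z)$ is the generating function for $(1+R(T))^b$ and $Q_{\ell,m}(z)$ collects all remaining contributions: the sub-leading $a<\ell+k$ terms inside each $G_{\ell+k,\, m-k}(z)$, plus all multinomial expansions that split the $S$-mass among at least two children.

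At $z = \rho_N$ everything is favourable. Since $\tau_0<\tau_1<\cdots$, i.e.\ $\rho_0 > \rho_1 > \cdots$, every $r_j$ with $j \le \ell+k \le N-1$ is analytic at $\rho_N$, so $G_\ell$ and each $G_{\ell+k}$ are analytic there, and the hypothesis $\Phi'(R)=\infty$ (via the analysis of $\rho_\ell$ in \refS{sec:mmt}) ensures $G_\ell(\rho_N) < G_\ell(\rho_\ell) < R$, so $\Phi'(G_\ell(\cdot))$ is analytic at $\rho_N$. The defining relation $\rho_\ell\Phi'(G_\ell(\rho_\ell))=1$ together with strict monotonicity of $z\mapsto z\Phi'(G_\ell(z))$ on $(0,\rho_\ell)$ gives $\rho_N\Phi'(G_\ell(\rho_N))<1$, so $1-z\Phi'(G_\ell(z))$ is nonvanishing at $\rho_N$. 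Every term in $Q_{\ell,m}(z)$ is likewise analytic at $\rho_N$, because any multi-child split with two or more positive $j_i$'s forces each corresponding index $\ell+k+j_i\le N-1$, so every constituent $f_{a,b}$ has $a+b<N$ and is analytic at $\rho_N$. Applying the induction hypothesis on $m$, each $f_{\ell+k,\,m-k}(z)$ with $k\ge 1$ has the same singular expansion at $\rho_N$ as $r_N(z)$; dividing the displayed equation through by the analytic nonvanishing factor $1-z\Phi'(G_\ell(z))$ endows $f_{\ell,m}(z)$ with the same singular type at $\rho_N$ as $r_N(z)$, whence the transfer theorem of \refS{sec:mmt} delivers \eqref{eq:RS}.

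The main technical hurdle will be verifying that the singular coefficient $\gamma'_{\ell,m}$ is strictly positive, not merely nonzero. This amounts to showing that the inductive singular contributions on the right-hand side do not cancel; positivity of the weights $w_d$, of $\Phi'$ and the $G_b$ on the positive real axis, and of the singular constants $\gamma_j$ of $r_j$ supplied by \refT{Tmom2}, makes such cancellation impossible and yields $\gamma'_{\ell,m}>0$.
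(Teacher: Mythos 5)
Your proposal is correct and follows essentially the same route as the paper: you derive the same linear functional equation (the paper's Lemma~\ref{LS}), induct on the $S$-power $m$ at fixed $N=\ell+m$, isolate the linear appearance of $f_{\ell,m}$ to expose the nonvanishing analytic denominator $1-z\Phi'(\FF_\ell(z))$ via \eqref{nora}, and transfer. The only cosmetic difference is that you obtain the functional equation by algebraic expansion of $R(T)^\ell S(T)^m$ with $S(T)=R(T)+\sum_i S(T_i)$, where the paper uses a combinatorial-class decomposition of marked root and general subtrees; these are equivalent.
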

The constants $\gamma'\ellm$ can be calculated explicitly, see 
\eqref{gamma'}.

\begin{remark}\label{Rmom}
  We can express \eqref{TLNR} and \eqref{TLNS} by saying that $R(\ctn)$ and
  $S(\ctn)$ have the asymptotic distribution $LN(n\mu,n\gss)$.
Note that if $W\sim LN(n\mu,n\gss)$ exactly, so $W=e^Z$ with $Z\sim
N(n\mu,n\gss)$, then the moments of $W$ are given by
\begin{equation}
  \E W^m = \E e^{mZ} = e^{mn\mu+m^2n\gss/2}=e^{(m\mu+m^2\gss/2)n}.
\end{equation}
We may compare this to \refT{Tmom} and ask whether
\begin{equation}\label{??}
\tau_m\overset{?}=e^{m\mu+m^2\gss/2}.  
\end{equation}
It seems natural to guess that equality holds in \eqref{??};
however, we show in \refR{Rneq} 
that it does not hold, at least not for all $m$, in the
case of full binary trees. We therefore conjecture that, in fact, 
equality never holds in \eqref{??}.
This may seem surprising;
however, note that the same happens in the simpler
case $Y=e^X$ with $X\sim\Bi(n,p)$, with $p$ fixed. Then
$Y$ is asymptotically $LN(np,np(1-p))$ in the sense above,
but $\E Y^m=\E e^{mX} =\bigpar{1+p(e^m-1)}^n$ while if 
$W\sim LN(np,np(1-p))$, then $\E W^m=e^{(mp+m^2p(1-p)/2)n}$, with a different
basis for the $n$:th power.
\end{remark}

\section{Proof of \refT{TLN}}\label{SpfLTN}

  \begin{proof}[Proof of \refT{TLN}]
First, by \eqref{SR2}, $|\log S(\ctn)-\log R(\ctn)|\le\log n$, and thus 
\eqref{TLNR} and \eqref{TLNS} are equivalent. 
Similarly, the first inequalities in \eqref{ElogR} and \eqref{VlogR} hold,
using also Minkowski's inequality for the latter.
We consider in the rest of the
proof only $R(\ctn)$. 

Suppose that the root $o$ of $T$ has $D$ children $v_1,\dots,v_D$, and write
$T_i:=T_{v_i}$.
Then, a root subtree of $T$ consists of the root $o$ and, for each child
$v_i$, either the empty set or a root subtree of $T_i$. Consequently,
\begin{equation}
\label{rut}
R(T) = \prod_{i=1}^D \bigpar{R(T_i)+1}  .  
\end{equation}

Define 
\begin{equation}\label{FT}
F(T):=\log \bigpar{R(T)+1}=\log R(T)+O(1).
\end{equation}
Then \eqref{rut} implies
\begin{equation}
  F(T)=\log R(T) + \log \bigpar{1+R(T)\qw}
= \sum_{i=1}^D F(T_i)
+ \log \bigpar{1+R(T)\qw}.
\end{equation}
In other words, $F(T)$ is an additive functional with toll function
$f(T):=\log \bigpar{1+R(T)\qw}$, see \eg{} \cite[\S1]{SJ285}.

For any tree $T$, and any node $v\in T$, the path from the root $o$ to $v$
is a root subtree. Hence,
\begin{equation}
  R(T) \ge |T|,
\end{equation}
and as a consequence,
\begin{equation}\label{est}
0\le  f(T):=\log \bigpar{1+R(T)\qw} \le R(T)\qw \le |T|\qw.
\end{equation}
In particular, 
we have the
deterministic bound $|f(\ctn)|\le 1/n$.
This bound implies that the conditions of \cite[Theorem 1.5]{SJ285} are
satisfied, and that theorem, together with the estimate in \eqref{FT}, 
yields \eqref{TLNR}, \eqref{ElogR} and
\eqref{VlogR}, for some $\mu,\gss\ge0$.
Furthermore, if $\cT$ is the (unconditioned) \GWt{} with offspring
distribution $\xi$, then
\begin{equation}\label{mu}
  \mu = \E f(\cT)>0.
\end{equation}
It remains only to verify that $\gss>0$. 
This is expected in all applications of 
 \cite[Theorem 1.5]{SJ285}, except trivial ones where $F(\ctn)$ is
 deterministic for all large $n$, but we do not know any general result; 
cf.\ \cite[Remark 1.7]{SJ285}.
In the present case, it can be verified as follows.

Consider a tree $T$. 
Denote the depth and out-degree (number of children) of a node $v\in T$ by
$d(v)$ and $\dv(v)$.
Fix a node $v\in T$, write $d=d(v)$, and let
the path from $o$ to $v$ be $o=v_0,v_1,\dots,v_d=v$.
By \eqref{rut}, we have for $j=0,\dots,d-1$,
\begin{equation}\label{ra}
R(T_{v_{j}}) = \ga_j \bigpar{R(T_{v_{j+1}})+1},   
\end{equation}
where $\ga_j$ is the
product of $R(T_w)+1$ over all children $w\neq v_{j+1}$ of $v_{j}$.
Note that each $R(T_w)\ge1$, and thus
\begin{equation}\label{ser}
  \ga_j \ge 2^{\dv(v_j)-1} \ge \dv(v_j).
\end{equation}
Define
\begin{equation}\label{beta}
  \gb(v):=\prod_{j=0}^{d-1} \ga_j,
\end{equation}
and
\begin{equation}\label{ala}
  \gamxx(v):= \sum_{j=1}^d\frac{ \gb(v_j)}{\gb(v)}
= \sum_{j=1}^d \prod_{k=j}^{d-1}\ga(v_k)\qw.
\end{equation}
Then repeated applications of \eqref{ra} (i.e., induction on $d$)
yield the expansion
\begin{equation}
  R(T)=R(T_{v_0})=\sum_{j=1}^d \gb(v_j)+\gb(v)R(T_v)
=\gb(v)\bigpar{R(T_v)+\gamxx(v)}.
\end{equation}
Hence, with
\begin{equation}\label{alax}
  \gamx(v):= \gamxx(v)+\gb(v)\qw=\sum_{j=0}^d\frac{ \gb(v_j)}{\gb(v)},
\end{equation}
we have
\begin{equation}\label{lys}
  F(T)=\log\bigpar{R(T)+1}
=\log\gb(v)+\log\bigpar{R(T_v)+\gamx(v)}.
\end{equation}
Define also 
\begin{equation}\label{ursa}
  \gamy(v):= \sum_{j=0}^d \prod_{k=j}^{d-1}\dv(v_k)\qw,
\end{equation}
and note that $\gamy(v)\ge\gamx(v)$ by \eqref{ser}--\eqref{alax}.

Now, let $T'$ be a modification of $T$, where the subtree $T_v$ is replaced
by some  tree $T_v'$, but all other parts of $T$ are left intact. Then all
$\ga_j$, $\gb(v_j)$, $\gamxx(v)$, $\gamx(v)$ and $\gamy(v)$ are the same for 
$T'$ as for $T$. 
Hence, 
if we  further assume that $R(T_v')<R(T_v)$, then \eqref{lys} yields
\begin{equation}\label{virgo}
  \begin{split}
  F(T)-F(T') &= \log\bigpar{R(T_v)+\gamx(v)} -\log\bigpar{R(T_v')+\gamx(v)}
\\&
\ge \log\bigpar{R(T_v)+\gamx(v)} -\log\bigpar{R(T_v)-1+\gamx(v)}
\\&
\ge \bigpar{R(T_v)+\gamx(v)}\qw
\ge \bigpar{R(T_v)+\gamy(v)}\qw.    
  \end{split}
\end{equation}

Next, fix an $\ell\ge2$ be such that $\P(\xi=\ell)>0$. 
Let $\tta$ be a tree where the root $o$ and two of its children have
out-degrees $\ell$, and all other nodes have out-degree 0 (i.e., they are leaves).
Similarly, let $\ttb$ be a tree where $o$, one of its children, and one of
its grandchildren have out-degree $\ell$, and all other nodes have out-degree 0.
Then both $\tta$ and $\ttb$ are  trees of order $3\ell+1$, 
and both are attained with positive probability by $\cT_{3\ell+1}$.
Furthermore, a simple
calculation using \eqref{rut} shows that
\begin{align}
  R(\tta)&=2^{\ell-2}(2^\ell+1)^2=2^{3\ell-2}+2^{2\ell-1}+2^{\ell-2}\label{tta}
  ,
\\
R(\ttb)&=2^{\ell-1}\bigpar{2^{\ell-1}(2^\ell+1)+1}=2^{3\ell-2}+2^{2\ell-2}+2^{\ell-1}
,
\label{ttb}
\end{align}
and thus $R(\tta)> R(\ttb)$. Consequently, the random variable $R(\cT_{3\ell+1})$
is not \aex{} equal to a constant.

Fix also a large constant $A$, to be chosen later, and say that a node 
$v\in T$ is \emph{good} if  $|T_v|=3\ell+1$ and $\gamy(v)\le A$.
Define the \emph{core} $T^*$ of $T$ as the subtree obtained by 
marking all good nodes in $T$ and then
deleting all descendants of them.
Note that 
adding back arbitrary trees of order $3\ell+1$ at each marked node of $T^*$
yield a tree $T'$ of the same order as $T$, and with the same good nodes,
because $|T_v|$ and $\gamy(v)$ are unchanged for every $v\in T^*$.
It follows that the random tree $\ctn$, conditioned on its core $\ctn^*=T^*$,
consists of $T^*$ with an added tree $T_v$ at each good (i.e., marked) node
of $T^*$, and that these added trees $T_v$ all have order $3\ell+1$ and are
independent copies of $\cT_{3\ell+1}$. 

Now suppose (in order to obtain a contradiction)
that $\gss=0$; then \eqref{TLNR} and \eqref{FT} show that 
$(F(\ctn)-\mu n)/\sqrt{n}\pto 0$. 
In particular,
\begin{equation}\label{ilu}
  \P\bigpar{|F(\ctn)-n\mu|>\sqrt n}\to0.
\end{equation}
We show in \refL{Lgood} below that there exists a constant $c>0$ such that,
for large $n$, 
$\ctn$ has with probability $\ge 1/2$ at least $cn$ good nodes.
Hence, \eqref{ilu} holds also if we condition on the existence of at least
$cn$ good nodes. Condition further on the core $\ctn^*$, and among the
possible cores $T^*$ of $\ctn$ with at least $cn$ good nodes, choose one that
minimizes
$ \P\bigpar{|F(\ctn)-n\mu|>\sqrt n\mid \ctn^*=T^*}$.
For each $n$, fix this choice $T^*=T^*(n)$, and note that
\begin{multline}\label{leu}
         \P\bigpar{|F(\ctn)-n\mu|>\sqrt n\mid \ctn^*=T^*}
\\
\le  \P\bigpar{|F(\ctn)-n\mu|>\sqrt n \mid \text{at least $cn$ good nodes}}\to0.
\end{multline}
Let $m$ be the number of good (i.e., marked) nodes in $T^*=T^*(n)$ and label
these $v_1,\dots,v_m$. 
Condition on $\ctn^*=T^*$.
Then, as noted above, 
$\ctn$ consists of $T^*$ with a tree $T_i$ added at $v_i$, for each $i$,
and these trees $T_1,\dots,T_m$ are $m$ independent copies of
$\cT_{3\ell+1}$.
Let $X_i:=R(T_i)$; thus $X_1,\dots,X_m$ are \iid{} random variables with
some fixed distribution. Furthermore, repeated applications of \eqref{rut}
show that $R(\ctn)$ is a function (depending on $T^*(n)$) of
$X_1,\dots,X_m$. Hence, 
by \eqref{FT}, we have, still conditioning on $\ctn^*=T^*$,
\begin{equation}\label{leo}
  F(\ctn) = g_n(X_1,\dots,X_m),
\end{equation}
for some function $g_n$.
Consequently, writing $Y_m:=g_n(X_1,\dots,X_m)$, we have by \eqref{leu}
\begin{equation}
  \P\bigpar{|Y_m-n\mu|>\sqrt n}
=          \P\bigpar{|F(\ctn)-n\mu|>\sqrt n\mid \ctn^*=T^*}\to0,
\end{equation}
as \ntoo. Recalling that $m\ge cn$, this implies
\begin{equation}\label{luna}
  \P\bigpar{|Y_m-n\mu|> c\qqw \sqrt m} \to 0.
\end{equation}

We now obtain the sought contradiction from \eqref{lna} in \refL{LNA} below.
(To be precise, we use a relabelling. 
We have $m=m(n)\to\infty$ as \ntoo; we may select a
subsequence with increasing $m$ and consider this sequence only, relabelling
$g_n$ as $g_m$.)
Note that in this application of \refL{LNA}, $S$ is a finite set of integers
(the range of $R(\cT_{3\ell+1})$).
The conditions of \refL{LNA} are satisfied:
by \eqref{tta}--\eqref{ttb}, we can find $s$ such that $0< \P(X_1\le s)<1$;
furthermore, 
\eqref{lna0} holds (under the stated condition) 
with $\gd:=(2^{3\ell+1}+A)\qw$
by \eqref{virgo}, since $\gamy(v_i)\le A$ by
the definition of good vertices and $R(T_v)\le 2^{|T_v|}=2^{3\ell-1}$.

This completes the proof that $\gss>0$, given the lemmas below.
  \end{proof}

  \begin{lemma}\label{Lgood}
With notations as above, there exists $A<\infty$ and $c>0$ such that,
for large $n$,
$\P\bigpar{\ctn \text{ has at least $cn$ good nodes}} \ge1/2$.
  \end{lemma}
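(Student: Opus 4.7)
The plan is to combine a clean deterministic bound on $\sum_v \gamy(v)\ett{|T_v|=3\ell+1}$ with the standard law of large numbers for the number of nodes of $\ctn$ having a prescribed fringe subtree size.

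First I would establish the deterministic inequality
\begin{equation}\label{KeyIneq}
\sum_{v\in T} \gamy(v)\,\ett{|T_v|=3\ell+1} \le |T|
\end{equation}
valid for every finite rooted tree $T$. Substituting the definition $\gamy(v) = \sum_{j=0}^{d(v)} \prod_{k=j}^{d(v)-1}\dv(v_k)\qw$ and swapping the double sum into a sum over ancestor--descendant pairs $(u,v)$ with $u\preceq v$, the summand becomes $\prod_{w:\,u\preceq w\prec v}\dv(w)\qw$, which is precisely the probability that a random walk starting at $u$, moving at each non-leaf node to a uniformly chosen child, passes through $v$. Since fringe subtree sizes strictly decrease along any root-to-leaf descent in $T_u$, such a walk can visit at most one node $x$ with $|T_x|=3\ell+1$; hence for each fixed $u$ the inner sum over $v$ is at most $1$, and summing over $u\in T$ yields \eqref{KeyIneq}.

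Applied to $T=\ctn$, \eqref{KeyIneq} immediately implies that the number of $v\in\ctn$ with $|T_v|=3\ell+1$ and $\gamy(v)>A$ is at most $n/A$. Meanwhile, by the fringe-subtree law of large numbers for conditioned \GW{} trees (a special case of the LLN for additive functionals, \eg{} in \cite{SJ285}), the count $N_n := \#\{v\in\ctn:|T_v|=3\ell+1\}$ satisfies $N_n/n \pto c_0 := \P(|\cT|=3\ell+1)$, with $c_0>0$ because the tree $\tta$ (of order $3\ell+1$, constructed earlier in the proof of \refT{TLN}) arises with positive \GW{} weight. Taking $A:=4/c_0$ and $c:=c_0/4$, the number of good nodes is at least $N_n-n/A$, which with probability tending to~$1$ exceeds $cn$; in particular this probability is $\ge 1/2$ for all large $n$.

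The only delicate step is spotting the identity that turns the apparently unwieldy functional $\gamy(v)$ into a sum of random-walk probabilities; once seen, \eqref{KeyIneq} is combinatorially transparent (each walk meets at most one node of any prescribed subtree-size), and the remaining ingredients -- concentration of fringe-subtree counts plus a single counting bound of Markov type -- are routine.
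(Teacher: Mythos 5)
Your proof is correct, and it follows a genuinely different and more elementary route than the paper's. The key step is your deterministic inequality $\sum_{v\in T}\gamy(v)\ett{|T_v|=3\ell+1}\le |T|$, which is clean once one notices that $\gamy(v)=\sum_{u\preceq v}\pi(u,v)$ and that, for fixed $u$, the random downward walk started at $u$ passes through at most one node $v$ with $|T_v|=3\ell+1$ because fringe-subtree sizes strictly decrease along any root-to-leaf path; hence the inner sum over $v$ is a total probability of disjoint events and is $\le 1$. Combined with $N_n/n\pto\P(|\cT|=3\ell+1)>0$, this gives the lemma (in fact with probability $\to1$). The paper instead works with the unrestricted sum $\sum_v\gamy(v)$, which cannot be bounded deterministically by $O(n)$ (a long path has $\sum_v\gamy(v)=\Theta(n^2)$), so it first proves by induction that $\E\gamz(\ctn)$ is bounded uniformly in $n$, then invokes the law of large numbers for additive functionals from \cite{SJ285} to conclude $\E\sum_v\gamy(v)=O(n)$, and finally applies Markov. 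Your restriction to nodes with $|T_v|=3\ell+1$ is precisely what makes the deterministic bound possible, sidestepping both the inductive estimate and the additive-functional machinery; the only ingredients you need are the swap of a double sum and the concentration of fringe counts, which the paper also uses. This is a real simplification of the proof.
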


  \begin{proof}
Note first that if 
$\P(\xi=1)=0$, and thus
$\ctn$ has no nodes of out-degree 1, 
then this is easy. In this case, \eqref{ursa} yields 
$\gamy(v)\le \sum_{j=0}^d 2^{j-d}<2$ for every $v$, 
since $\dv(v_k)\ge2$ for
each $v_k$. Taking $A=2$, every node $v$ with $|T_v|=3\ell+1$ is thus good.
If $n_{3\ell+1}(\ctn)$ denotes the number of these nodes in $\ctn$,
then  
\begin{equation}
n_{3\ell+1}(\ctn) /n \pto \P\bigpar{|\cT|=3\ell+1}>0,
\qquad\text{as \ntoo},
\end{equation}
and thus 
\begin{equation}\label{aquila}
\P\bigpar{n_{3\ell+1}(\ctn)>cn}\to 1  
\end{equation}
for any $c< \P(|\cT|=3\ell+1)$.

In general, \eqref{aquila} still holds, but there is no uniform bound on
$\gamy(v)$, as is shown by the case of a long path, and it remains to show
that $\gamy(v)$ is bounded for sufficiently many nodes.
We define, for a given tree $T$ and any pair of nodes $v,w$ with $v\preceq
w$,
\begin{equation}\label{pi}
  \pi(u,v):=\prod_{u\preceq w\prec v}\dv(w)\qw.
\end{equation}
We then can rewrite \eqref{ursa} as
\begin{equation}\label{gamy}
  \gamy(v)=\sum_{u\preceq v}\pi(u,v),
\end{equation}
and we define also the dual sum
\begin{equation}\label{gamz}
  \gamz(v)=\sum_{w\succeq v}\pi(v,w).
\end{equation}
Note that $\gamz(v)$ is a functional of the fringe subtree $T_v$.
We write $\gamz(T):=\gamz(o)$, where $o$ is the root of $T$;
then for an arbitrary node $v\in T$, $\gamz(v)=\gamz(T_v)$.

We may also note, although we do not use this explicitly, 
that $\gamz(v)$ has a natural
interpretation: $\pi(v,w)$ is the probability that a random walk, started
at $v$ and at each step choosing a child uniformly at random, will pass
through $w$. Hence, $\gamz(v)$ is the expected length of this random walk.

If the root $o$ of $T$ has $D$ children $v_1,\dots,v_D$, and the
corresponding 
fringe trees are denoted $T_1,\dots,T_D$, then
\begin{equation}\label{aries}
  \gamz(T)=\sum_{w\in T}\pi(o,w) = 1+\sum_{i=1}^{D} \sum_{w\in T_i} D\qw\pi(v_D,w)
=1+D\qw\sum_{i=1}^{D} \gamz(T_i).
\end{equation}
We apply this with $T=\ctn$, the \cGWt.
Note that conditioned on the root degree $D$, and the sizes $n_i:=|T_i|$ of
the subtrees, each $T_i$ is a conditioned \GWt{} $\cT_{n_i}$.
Consequently, \eqref{aries} yields
\begin{equation}\label{taurus}
\E\bigpar{\gamz(\ctn)\mid D, n_1,\dots,n_D}
=1+D\qw\sum_{i=1}^{D} \E \gamz(\cT_{n_i}).
\end{equation}
We claim that 
\begin{equation}\label{cca}
  \E \gamz(\ctn)\le \CCname{\CCa}
\end{equation}
for some constant $\CCa$ and all $n$.
We prove this by induction, assuming that \eqref{cca} holds for all smaller
$n$. Note also that if $|T|=1$, then $\gamz(T)=1$.
Hence, by \eqref{taurus} and the induction hypothesis, 
if $D_1:=|\set{i:n_i=1}|$, the number of children
of $o$ that are leaves, then
\begin{equation}\label{juno}
  \begin{split}
    \E\bigpar{\gamz(\ctn)\mid D, n_1,\dots,n_D}
&\le1+D\qw \bigpar{(D-D_1)\CCa+D_1}
\\
&=\CCa + 1 -D_1(\CCa-1)/D.
  \end{split}
\end{equation}
and hence
\begin{equation}\label{hera}
  \begin{split}
    \E\gamz(\ctn)
\le\CCa + 1 -(\CCa-1)\E(D_1/D),
  \end{split}
\end{equation}
where $D_1$ and $D$ are calculated for the random tree $\ctn$.
As \ntoo, the distribution of the pair $(D,D_1)$ converges to the $(\hat
D,\hat D_1)$, the same quantities for the random limiting infinite tree
$\hat T$, see for example \cite[Section 5 and Theorem 7.1]{SJ264}.
Hence, using bounded convergence, $\E(D_1/D)\to\E(\hat D_1/\hat D)>0$ as \ntoo.
Since $\P(D_1>0)>0$ for every $n$, and thus $\E(D_1/D)>0$ for every $n$,
it follows that there exists a constant $c_1>0$ such that for every $n$,
$\E(D_1/D)\ge c_1$. If we choose $\CCa=1+1/c_1$, then \eqref{hera} yields
$\E\gamz(\ctn)\le\CCa$, which verifies the induction step.
Hence, \eqref{cca} holds for all $n$.

Next, let, for any tree $T$,
\begin{equation}\label{ZT}
  Z(T):=\sum_{v\in T} \zeta(T_v),
\end{equation}
the additive functional with toll function $\zeta(T)$.
It follows from \eqref{cca} that
\begin{equation}
    \E{\zeta(\cT)}
    =
    \sum_{n \ge 1}
    \p{|\cT|=n} \e{\zeta(\cT_n)}
    \le \CCa,
    \label{aries1}
\end{equation}
where \(\cT\) denotes an unconditioned \GWt.
By \cite[Remark 5.3]{SJ285}, it follows from \eqref{aries1} and \eqref{cca}
that
\begin{equation}\label{Zctn1}
  \E Z(\ctn)
  \sim
  n \E \zeta(\cT)
  =
 O( n)
  .
\end{equation}
Thus there exists a constant \(\CCname{\CCb}\) such that for all \(n \ge 1\),
\begin{equation}\label{Zctn}
  \E Z(\ctn)
  \le
  {\CCb} n
  .
\end{equation}
Consequently, by Markov's inequality, with probability $\ge \frac23$,
\begin{equation}\label{Zctn2}
  Z(\ctn) \le 3\CCb n.
\end{equation}

For any tree $T$, \eqref{ZT} and \eqref{gamy}--\eqref{gamz} yield
\begin{equation}
  Z(T)=\sum_{v\in T} \zeta(v)=\sum_{v,w:v\preceq w}\pi(v,w)
=\sum_{w\in T} \gamy(w).
\end{equation}
Hence, if we choose $A:=6\CCb/c$, then \eqref{Zctn2} implies that
at most $3\CCb n/A = cn/2$ nodes $w$ in $\ctn$ satisfy $\gamy(w)>A$, and
hence at least $n_{3\ell+1}(\ctn)-cn/2$ nodes are good.
This and \eqref{aquila} show that, with probability $\frac{2}3+o(1)$,
$\ctn$ has at least $cn/2$ good nodes.
  \end{proof}

  \begin{remark}
As the proof shows, the probability $1/2$ in \refL{Lgood} can be replaced by
any number $<1$. We conjecture that in fact, for suitable $A$ and $c$, the
probability tends to 1.    
  \end{remark}

  \begin{remark}
If we assume that the offspring distribution $\xi$ has an exponential
moment, so that its \pgf{} has radius of convergence $>1$, then one can
alternatively derive \eqref{cca} and \eqref{Zctn}, and precise asymptotics, 
using  generating functions. We leave this to the reader.     
  \end{remark}

  \begin{lemma}\label{LNA}
Let $X_1,X_2,\dots$ be \iid{} random variables, with values in some set
$S\subseteq\bbR$.
    Let $Y_m=g_m(X_1,\dots,X_m)$, for some functions $g_m:S^m\to\bbR$, 
$m\ge1$,
and    assume that there is a number $s$ and a $\gd>0$ 
such that
$0<\P(X_1\le s)<1$ and
that 
\begin{equation}\label{lna0}
  g_m\bigpar{y_1,\dots,y_{j-1},y_j',y_{j+1},\dots,y_m}
\ge   g_m\bigpar{y_1,\dots,y_{j-1},y_j,y_{j+1},\dots,y_m}+ \gd,
\end{equation}
for any $m$,
$j\le m$,
$y_1,\dots,y_m\in S$ and 
$y_j'\in S$,
such that $y_j\le s$ and $y_j'>s$.
 
Then, for any constant $B$ and any sequence $\mu_m$,
\begin{equation}\label{lna}
  \limsup_{\mtoo}\P\bigpar{|Y_m-\mu_m|\le B\sqrt m} <1.
\end{equation}
  \end{lemma}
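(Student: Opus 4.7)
The plan is to reduce the claim to a standard anti-concentration bound for a binomial, exploiting that \eqref{lna0} forces a quantitative monotonicity of $Y_m$ in the number of coordinates exceeding $s$. First I would represent each $X_j$ through its position relative to $s$: set $q := \P(X_1 > s) \in (0, 1)$ and let $Z_j$ be \iid{} Bernoulli$(q)$, $V_j$ be \iid{} with the law of $X_1$ conditional on $X_1 > s$, and $W_j$ be \iid{} with the law of $X_1$ conditional on $X_1 \le s$, with the three sequences independent of each other. Then $(V_j Z_j + W_j(1 - Z_j))_{j=1}^m$ has the same joint distribution as $(X_j)_{j=1}^m$, so we may assume this representation. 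Conditional on $(V, W) := ((V_j)_j, (W_j)_j)$, the variable $Y_m$ becomes a function $h_{V, W}(Z_1, \ldots, Z_m)$ on $\set{0,1}^m$ which, by \eqref{lna0}, increases by at least $\gd$ whenever any coordinate is flipped from $0$ to $1$.

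The key step is an enhanced stochastic dominance between the conditional laws of $Y_m$ at consecutive levels of $N := \sum_j Z_j \sim \Bi(m, q)$. For each $k \ge 1$, let $Z$ be uniform on $\binom{[m]}{k-1}$, let $J$ be uniform on $[m] \setminus Z$, and set $Z' := Z \cup \set{J}$; a short count shows that $Z'$ is uniform on $\binom{[m]}{k}$, and the coordinatewise monotonicity yields $h_{V, W}(Z') \ge h_{V, W}(Z) + \gd$ almost surely. Therefore the conditional cdfs $F_k(y; V, W) := \P(Y_m \le y \mid N = k, V, W)$ satisfy $F_k(y; V, W) \le F_{k-1}(y - \gd; V, W)$ for every $y$, and the conditional medians $\nu_k(V, W) := \inf\set{y : F_k(y; V, W) \ge 1/2}$ are strictly increasing in $k$ with $\nu_k(V, W) \ge \nu_{k-1}(V, W) + \gd$.

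Now fix $I_m := [\mu_m - B\sqrt m,\, \mu_m + B\sqrt m]$. Whenever $\nu_k(V, W) \notin I_m$, the defining property of a median gives $\P(Y_m \in I_m \mid N = k, V, W) \le 1/2$; splitting on the event $\set{\nu_N(V, W) \in I_m}$ yields
\begin{equation*}
\P(Y_m \in I_m) \le \P\bigpar{\nu_N(V, W) \in I_m} + \tfrac12\P\bigpar{\nu_N(V, W) \notin I_m} = \tfrac12 + \tfrac12\P\bigpar{\nu_N(V, W) \in I_m}.
\end{equation*}
Since the medians are $\gd$-separated, $\set{k : \nu_k(V, W) \in I_m}$ is an interval of integers of length at most $2B\sqrt m/\gd + 1$, so by independence of $N$ and $(V, W)$ we get $\P(\nu_N(V, W) \in I_m) \le \sup_{J} \P(N \in J)$, where the supremum is over intervals $J$ of length at most $2B\sqrt m/\gd + 1$. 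The CLT for $N \sim \Bi(m, q)$ then gives that this supremum tends, as $\mtoo$, to $2\Phi\bigpar{B/(\gd\sqrt{q(1-q)})} - 1 < 1$, so $\limsup_m \P(Y_m \in I_m) \le \Phi\bigpar{B/(\gd\sqrt{q(1-q)})} < 1$, which is \eqref{lna}.

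The main subtlety is that the coupling produces only stochastic dominance of conditional distributions, not a pointwise bound on $Y_m$ itself; the median-splitting trick in the third paragraph is what converts this stochastic dominance into the required anti-concentration, exploiting that medians are preserved under stochastic dominance while the probabilities of falling into a given interval need not be.
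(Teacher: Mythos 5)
Your proof is correct, and it takes a genuinely different route from the paper's. Both proofs hinge on the same two structural ingredients: conditioning on $N := |\{j : X_j > s\}|$, and the $\gd$-shift forced by \eqref{lna0} when a coordinate crosses $s$. But the paper argues by contradiction with a single ``one-shot'' coupling: assuming concentration, it conditions on two carefully chosen levels $N = \nmm$ and $N = \nmp$ separated by $\gtrsim K\sqrt m$, couples $(X_i)$ at those two levels by swapping in $\nmp - \nmm$ coordinates, and reads off a deterministic gap $\ge (\nmp - \nmm)\gd$ in $g_m$ that contradicts the assumed $O(\sqrt m)$ concentration. Your argument is direct: the nearest-neighbour coupling of $\binom{[m]}{k-1}$ and $\binom{[m]}{k}$ shows stochastic dominance with a $\gd$-shift at every level, so the conditional medians $\nu_k(V,W)$ are $\gd$-separated, and the median-splitting bound $\P(Y_m \in I_m) \le \tfrac12 + \tfrac12 \P(\nu_N \in I_m)$ reduces the whole problem to anti-concentration of $N \sim \Bi(m,q)$ on an interval of length $O(\sqrt m)$. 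What your approach buys is an \emph{explicit} upper bound on the $\limsup$, namely $\Phi\bigpar{B/(\gd\sqrt{q(1-q)})}$; the paper's contradiction argument only yields $\limsup < 1$. The one place that deserves a sentence of justification is the step ``$\sup_J \P(N \in J) \to 2\Phi(B/(\gd\sqrt{q(1-q)})) - 1$'' for intervals $J$ of length $2B\sqrt m/\gd + 1$: the plain CLT gives convergence only for a fixed (rescaled) interval, so to take the sup uniformly one should invoke Berry--Esseen (or $\Polya$'s theorem for uniform convergence of cdfs, together with unimodality of the binomial); alternatively, for the qualitative conclusion $\limsup < 1$ it suffices to note that for $K$ large the two fixed intervals $[mq - 2K\sqrt m, mq - K\sqrt m]$ and $[mq + K\sqrt m, mq + 2K\sqrt m]$ each carry probability bounded below, and any $J$ of length $< K\sqrt m$ misses at least one of them. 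With that small addition, the argument is complete and self-contained.
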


  \begin{proof}
First, by replacing $g_m$ by $g_m-\mu_m$, we may assume that $\mu_m=0$.

If \eqref{lna} does not hold, then, by restricting attention to a
subsequence, we may assume $\P\bigpar{|Y_m|\le B\sqrt m} \to1$,
as \mtoo.

    Let $N_m:=|\set{i:X_i> s}|$. Thus $N_m$ has a binomial distribution
    $\Bi(m,p)$, where $p:=\P(X_1>s)\in (0,1)$.
Fix a large number $K>0$, and define the events 
$\cemp:=\set{N_m>mp+K\sqrt m}$ and $\cemm:=\set{N_m<mp-K\sqrt m}$.
By the central limit theorem for the binomial distribution, $\P(\cemp)\to q$
and $\P(\cemm)\to q$ for some $q>0$, and thus our assumption implies that
\begin{equation}
\P\bigpar{|Y_m|\le B\sqrt m\mid\cemp} \to1,
\qquad
\P\bigpar{|Y_m|\le B\sqrt m\mid\cemm} \to1.
\end{equation}
Hence we can find integers $\nmp$ and $\nmm$ with 
$0\le\nmm<mp-K\sqrt m<mp+K\sqrt m<\nmp\le n$ such that
\begin{equation}\label{thr}
\P\bigpar{|Y_m|\le B\sqrt m\mid N_m=\nmp} \to1,
\qquad
\P\bigpar{|Y_m|\le B\sqrt m\mid N_m=\nmm} \to1.
\end{equation}
(Choose \eg{} $n_m^\pm$ as the integers in the allowed ranges that maximize
these probabilities.) 

Let $\cxmm=(\xm_1,\dots,\xm_m)$ be a random vector with the distribution of 
$\bigpar{(X_i)_1^m\mid N_m=\nmm}$.
By construction, a.s., 
exactly $\nmm$ of the variables $\xm_i$ satisfy
$\xm_i>s$, and thus $m-\nmm$ satisfy $\xm_i\le s$. Select $\nmp-\nmm$ of the
latter variables, chosen uniformly at random (independent of everything
except the set of indices $\set{i:\xm_i\le s}$),
and replace these by variables $\xp_i$ that are \iid{} copies of the random
variable $\xp:=\bigpar{X_1\mid X_1>s}$ (and independent of everything
else). Denote the result by $\cxmp$;
then
$\cxmp\eqd\bigpar{(X_i)_1^m\mid N_m=\nmp}$.
Consequently, by \eqref{thr},
\begin{equation}\label{try}
\P\bigpar{\left| g_m\bigpar{\cxmm} \right| \le B\sqrt m} \to1,
\qquad
\P\bigpar{\left| g_m\bigpar{\cxmp} \right| \le B\sqrt m} \to1.
\end{equation}
Hence, 
\begin{equation}\label{tryx}
\P\bigpar{|g_m\bigpar{\cxmm}-g_m\bigpar{\cxmp}|\le 2B\sqrt m} \to1.
\end{equation}
On the other hand, \eqref{lna0} and the construction imply that
\begin{equation}
  g_m\bigpar{\cxmp} - g_m\bigpar{\cxmm} \ge \bigpar{\nmp-\nmm}\gd
> 2K\sqrt{m}\gd.
\end{equation}
Choosing $K=B\gd\qw$, we obtain a contradiction with \eqref{tryx}.
  \end{proof}

  \begin{remark}\label{Rmugss}
The constant $\mu$ equals $\E f(\cT)$ by \eqref{mu};
we do not know any explicit closed form expression for $\mu$, 
but it seems possible to use \eqref{mu} for numerical calculation of $\mu$
for a given offspring distribution.
(Note that, by \eqref{est}, 
$f(T)\le R(T)\qw$, which typically
decreases exponentially in the size of $T$, so convergence ought to be
rather fast.)
For $\gss$, \cite[(1.17)]{SJ285} gives the formula
\begin{equation}
  \gss=2\E\bigpar{f(\cT)(F(\cT)-|\cT|\mu)}-\Var[f(\cT)]-\mu^2/\Var(\xi).
\end{equation}
Again, we do not know any closed form expression, but numerical calculation
should be possible.
  \end{remark}

\section{Moments of the number of root subtrees}\label{sec:mmt}

In this section we prove \refT{Tmom2}, using generating functions
and the language of simply generated trees; note that this also shows the 
equivalent \refT{Tmom}.
In  \refSSs{sec:gf} and \ref{sec:sing}, we study a simple example of
simply generated trees to 
illustrate the main idea behind Theorem \ref{Tmom2}; in this example we
derive explicit formulas for some generating functions.
The proof for the general case is postponed to  \refSS{sec:mmt:proof};
it uses the same argument
(but in general we do not find explicit formulas).

\subsection{An example: full binary trees}
\label{sec:gf}

Consider 
as an example
the simply generated tree \(\ctn\) with the generator $\Phi(z):=1+z^2$.
Then $\ctn$ is a uniformly random full binary tree of order $n$.  (Provided $n$ is
odd; otherwise, such trees do not exist.) Note that \(\Phi(z)\) satisfies the conditions of
\refT{Tmom2}.
(Note that we have chosen a generator that is not a probability generating
function; the corresponding offspring distribution $\xi$ has 
probability generating function $\frac12(1+z^2)$, and thus
$\P(\xi=0)=\P(\xi=2)=\frac12$; this generator would lead to similar
calculations and the same final result.)

A \emph{combinatorial class} is a finite or countably infinite set on which
a size function of range
\(\bbZ_{\ge 0}\) is defined. For a combinatorial class \(\cD\) and an element \(\delta \in \cD\), let
\(|\delta|\) denote its size.  The \emph{generating function} of \(\cD\) is
defined by 
\begin{align} 
    D(z) 
    \coloneqq \sum_{\delta \in \cD} 
    z^{|\delta|}
    =
    \sumno
    d_{n}
    z^{n}
    ,
    \label{eq:A:gf}
\end{align}
where \(d_{n}\) denotes the number of elements in \(\cD\) with size \(n\).
It encodes all the information of \( (d_{n})_{n \ge 0}\) and is a powerful tool to get asymptotic
approximations of \(d_{n}\).

Let \(\cZ = \{\Cdot\}\) denote the combinatorial class of node, which contains only one element
\(\Cdot\) since we are considering unlabelled trees. Let \(|\Cdot|=1\).  
Then the generating function of \(\cZ\) is simply \(z\).
Let \(\cF_{0}\) denote the combinatorial class of full binary trees. For \(T \in \cF_{0}\), we let \(|T|\) be the total
number of nodes in \(T\). Since \(T\) is a binary tree, it must be either a node, or a node together
with a left subtree \(T_{1}\) and a right subtree \(T_{2}\), with \(T_{1}, T_{2} \in \cF_{0}\). This
can be formalized by the symbolic language developed by \citet[p.~67]{Flajolet2009} as
\begin{align}
    \cF_{0} 
    = 
    \cZ + \cZ \times \cF_{0} \times \cF_{0}
    ,
    \label{eq:F0:sym}
\end{align}
with \(+\) denotes ``or'' and \(\times\) denotes ``combined with''.

Let \(F_{0}(z)\) denote the generating function of \(\cF_{0}\), i.e., 
\begin{equation}\label{F0}
F_0(z):=\sum_{T}z^{|T|}  
=\sumn a_n z^n
,
\end{equation}
where $a_n$ is the number of full binary trees of order $n$.
Then the definition \eqref{eq:F0:sym} directly
translates into the functional equation
\begin{align}
    F_{0}(z)
    = 
    z + z \times F_{0}(z) \times F_{0}(z)
    =
    z \Phi(F_{0}(z))
    ,
    \label{F0a}
\end{align}
with the explicit solution
\begin{equation}\label{F0x}
  F_0(z)=\frac{1-\sqrt{1-4z^2}}{2z}.
\end{equation}


To compute \(\E R(T_{n})\), we consider a pair  \( (T,T')\) in which
\(T\) is a full binary tree and \(T'\) is a rooted subtree of \(T\) painted with color \(1\). Let \(\cF_{1}\) be the
combinatorial class of such partially colored full binary trees,
with
\(|(T,T')| = |T|\). Let
\(F_{1}(z)\) be the generating function of \(\cF_{1}\), i.e.,
\begin{equation}\label{F1}
F_1(z)
:=
\sum_{T}\sum_{T'\subseteq_{r} T} z^{|T|}  
= 
\sum_{T}R(T) z^{|T|}  
=:
\sumn \ai_n z^n
.
\end{equation}
Then, for any (odd) $n$,
\begin{equation}\label{ER}
  \E R(\ctn)= \ai_n/a_n.
\end{equation}

For a tree $T$ in
\(\cF_{1}\), its root $o$ is always colored. 
Every subtree $T_v$ where $v$ is a child of $o$ (so $d(v)=1$) can be
either itself a partially colored tree (an element of \(\cF_{1}\)) or
an uncolored tree (an element of \(\cF_{0}\)). Thus, we have the following symbolic specification
\begin{align}
    \cF_{1} 
    = 
    \cZ + \cZ \times (\cF_{0} + \cF_{1}) \times \left( \cF_{0} + \cF_{1} \right)
    =
    \cZ \Phi(\cF_{0} + \cF_{1})
    .
    \label{eq:F1:sym}
\end{align}
Consequently, using \eqref{F0a},
\begin{equation}\label{F1a}
  \begin{split}
  F_1(z)
&=z\Phi\bigpar{F_1(z)+F_0(z)}
=z+z\bigpar{F_0(z)+F_1(z)}^2
\\&
=F_0(z)+2zF_0(z)F_1(z)+zF_1(z)^2.    
  \end{split}
\end{equation}
with the explicit solution
\begin{equation}\label{F1x}
  \begin{split}
      F_1(z)&=\frac{1-\sqrt{1-4z(z+F_0(z))}}{2z}-F_0(z)
\\&
=\frac{1-\sqrt{2\sqrt{1-4z^2}-1-4z^2}}{2z}-F_0(z).
  \end{split}
\end{equation}

For the second and higher moments we argue similarly.
For \(m \ge 1\), we consider a \( (m+1)\)-tuple \( (T,T_{1}',\cdots,T_{m}')\) in which \(T\) is a
full binary tree and \(T_{1}',\cdots,T_{m}'\) are \(m\) root subtrees of \(T\) with \(T_{i}'\) painted with
color \({i}\). (Note that \(T_{1}',\cdots,T_{m}'\) are not necessarily
distinct.
Note also that a node may have several colors.)
Let \(\cF_{m}\) be the combinatorial class of
such partially \(m\)-colored trees. Let \( |(T,T_{1},\cdots,T_{m}')| = |T|\). 
Let \(F_{m}(z)\) be the generating function of \(\cF_{m}\), i.e.,
\begin{equation}\label{Fm}
F_m(z)
:=
\sum_{T}\sum_{T'_1,\dots,T_m'\subseteq_{r} T} z^{|T|}  
= 
\sum_{T}R(T)^m z^{|T|}  
=:
\sumn \ax{m}_n z^n
.
\end{equation}
Then, for any (odd) $n$,
\begin{equation}\label{ERm}
  \E R(\ctn)^m= \ax{m}_n/a_n.
\end{equation}

The root $o$ of a tree in $\cF_m$
is always painted by all $m$ colors.  
Every subtree $T_v$ where $v$ is a child of $o$
is itself a partially
$C$-colored tree for some set of colors $C\subseteq [m]:=\set{1,\dots,m}$.
Let, for a given (finite) set of colors $C$, $\cF_C$ be the class of 
partially $C$-coloured trees, defined analogously to $\cF_m$,
and note that there is an obvious isomorphism
$\cF_C\cong\cF_{|C|}$. Furthermore, let $\cFF_m:=\bigcup_{C\subseteq[m]}\cF_{C}$.
Taking into account that there are \(\binom{m}{k}\) ways to choose \(k\)
colors out of \(m\), we thus have  the equations
\begin{align}
    \cF_{m} 
&    = 
    \cZ + \cZ \times 
\cFF_m
    \times
\cFF_m
    =
    \cZ \Phi\bigpar{
\cFF_m
    },
   \label{eq:Fm:sym}
\\
\cFF_m 
&= 
       \sum_{k=0}^{m} \binom{m}{k} \cF_{k}
. 
    \label{eq:FFm}
\end{align}
%
Consequently, for the corresponding generating functions,
\begin{equation}\label{Fma}
  \begin{split}
  F_m(z)
&=z\Phi\bigpar{\FF_m(z)}
=z+z\lrpar{\sum_{k=0}^m \binom mk F_k(z)}^2,
  \end{split}
\end{equation}
which determines every $F_m(z)$ by recursion, solving a quadratic equation
in each step.
Equivalently, and perhaps  more conveniently,
\begin{equation}\label{FFma}
  \begin{split}
 \FF_m(z)
&
=  \sum_{k=0}^{m} \binom{m}{k} F_{k}(z)
=  \sum_{k=0}^{m-1} \binom{m}{k} F_{k}(z)
+z\Phi\bigpar{\FF_m(z)}.
\\&
=  \sum_{k=0}^{m-1}(-1)^{m-k+1} \binom{m}{k} \FF_{k}(z)
+z\Phi\bigpar{\FF_m(z)}.
  \end{split}
\end{equation}
For example,
for $m=2$,
\begin{equation}\label{F2a}
  \begin{split}
  F_2(z)
&=z\Phi\bigpar{F_2(z)+2F_1(z)+F_0(z)}
=z+z\bigpar{F_0(z)+2F_1(z)+F_2(z)}^2
\\&
=z+z\bigpar{F_0(z)+2F_1(z)}^2+2z\bigpar{F_0(z)+2F_1(z)}F_2(z)+zF_2(z)^2
,
  \end{split}
\end{equation}
and
\begin{equation}\label{FF2a}
  \begin{split}
\FF_2(z)= F_0(z)+2F_1(z)+z\Phi\bigpar{\FF_2(z)}   
= -\FF_0(z)+2\FF_1(z)+z+z\FF_2(z)^2   
.
  \end{split}
\end{equation}

Explicitly, we obtain from \eqref{F2a} or \eqref{FF2a}
\begin{multline}  \label{F2b}
F_2(z)=
\frac{1}
{2 z}
\left(
2 \sqrt{2 \sqrt{1-4 z^2}-1-4 z^2}
-    \sqrt{1-4 z^2}
\right.
    \\
-\left.
    \sqrt{4 \sqrt{2 \sqrt{1-4 z^2}-1-4 z^2}-2 \sqrt{1-4 z^2}-1-4 z^2}
\right)
.
\end{multline}

\subsection{Singularity analysis: full binary trees}
\label{sec:sing}

Let $\rho_m$ be the radius of convergence of $F_m(z)$; then $\rho_m$ is a
singularity of $F_m(z)$ (of square-root type).
We see from \eqref{F0x} that
\begin{equation}
  1-4\rho_0^2=0
  ,
\end{equation}
and thus 
\begin{equation}
  \rho_0=\frac12.
    \label{eq:rho:0}
\end{equation}
Since full binary trees can only have odd number of nodes, we have \(a_{2m} = 0\) for \(m \ge 0\).
For odd \(n\), applying singular analysis to \eqref{F0x} gives
\begin{align}
    a_{n}
    =
    \left( 
        1 + O\left( n^{-1} \right)
    \right)
    \lambda_{0}
    {n^{-\frac{3}{2}}\rho_{0}^{-n}}
    ,
    \label{eq:a:n}
\end{align}
where \(\lambda_{0}=\sqrt{\frac{2}{\pi}}\).
See  \cite[Theorem~VI.2]{Flajolet2009} for details.
(In fact, in this case we have the well-known exact formula
$a_{2m+1}=C_m:=(2m)!/(m!\,(m+1)!)$, the Catalan numbers \cite[p.~67]{Flajolet2009}.)

Similarly, \eqref{F1x} shows that
\begin{equation}
{2\sqrt{1-4\rho_1^2}-1-4\rho_1^2}=0
,
    \label{eq:rho:1}
\end{equation}
and thus
\begin{equation}\label{rho1a}
  \rho_1=\frac{\sqrt{2\sqrt3-3}}{2}
\doteq  0.340625
.
\end{equation}
Using the standard singular analysis recipe 
(see \cite[Figure VI.7, p.~394]{Flajolet2009}),
\begin{align}
    \ai_n = 
    \left( 
        1
        +
        O\left( 
            n^{-1}
        \right)
    \right)
    \lambda_{1}
    n^{-\frac{3}{2}}
    \rho_{1}^{-n}
    ,
    \label{eq:a:n:1}
\end{align}
where \(\lambda_{1} = \sqrt{ \frac{3+ \sqrt{3}}{ \pi} } \doteq1.227297\).
(Such computations can be partially automated with Maple, see, e.g.,
\cite{Salvy1991}.)
Thus \eqref{ER} implies that
\begin{align}
    \E R(\ctn) = 
    \left( 
        1 + O \left(n^{-1} \right)
    \right)
    \frac{\lambda_{1}}{\lambda_{0}}
    \left(
        \frac{
            \rho_{0}
        }{
            \rho_{1}
        }
    \right)^{n}
    .
    \label{eq:ER}
\end{align}

For the second moment,
\eqref{F2b} similarly yields
\begin{align}
    \rho_{2}
    =
    \frac{1}{2} \sqrt{2 \sqrt{48 \sqrt{2}+59}-8 \sqrt{2}-11}
    \doteq
    0.231676
    .
    \label{eq:rho:2}
\end{align}
Thus
\begin{align}
    \aii_{n}
    =
    \left( 
        1 + O \left(n^{-1} \right)
    \right)
    \lambda_{2}
    n^{-\frac{3}{2}}
    \rho_{2}^{-n}
    \label{eq:a:n:2}
    ,
\end{align}
where \(\lambda_{2}\doteq 1.883418\)
is a constant. Then by \eqref{ERm}
\begin{align}
    \E R(T_{n})^{2}
    =
    \left( 
        1 + O \left(n^{-1} \right)
    \right)
    \frac{\lambda_{2}}{\lambda_{0}}
    \left(
        \frac{\rho_{0}}{\rho_{1}}
    \right)^{n}
    .
    \label{eq:ER:2}
\end{align}


It is not difficult to prove by induction that there exist sequences of numbers
\(\lambda_{m}>0\) and \(\rho_{0} > \rho_{1} > \cdots\) such that for every fixed \(m \ge 1\),
\begin{align}
    \ax{m}_{n} =
    \left( 1 + O\left( n^{-1} \right)\right)
    \lambda_{m} n^{-3/2} \rho_{m}^{-n}
    \label{eq:a:n:m}
\end{align}
and
\begin{align}
    \e{R(T_{n})^{m}} =
    \left( 1 + O\left( n^{-1} \right)\right)
    \frac{
        \lambda_{m} 
    }{
        \lambda_{0}
    }
    \left(
        \frac{
            \rho_{0}
        }{
            \rho_{m}
        }
    \right)^{n}
    .
    \label{eq:ER:m}
\end{align}
This is \eqref{eq:mmt3} with $\gam_m=\gl_m/\gl_0$ and
$\tau_m=\rho_0/\rho_m=(2\rho_m)\qw$.
In particular,
\begin{align}
    \tau_1
    &
    =\frac{1}{2\rho_1}=\frac{\sqrt{2\sqrt3+3}}{\sqrt3}
    =\sqrt{\frac{2}{\sqrt3}+1}
    \doteq 1.467890,
\label{bin:tau1}  
    \\
    \tau_{2}
    &
    =
    \frac{1}{2 \rho_{2}}
    =
    \frac{1}{7} \sqrt{57+40 \sqrt{2}+2 \sqrt{1635+1168 \sqrt{2}}}
    \doteq
    2.158182
    ,
\label{bin:tau2}
\end{align}
and
\begin{equation}
    \gamma_{1} 
    = 
    \sqrt{
        \frac{
            3+\sqrt{3}
        }{
            2
        }
    }
    \doteq
    1.538189,
    \qquad
    \gamma_{2}
    \doteq
    2.360501
    .
    \label{bin:gamma}
\end{equation}
We do not have a closed form of \(\rho_{m}\) or $\tau_m$ for \(m \ge 3\).
Table \ref{table:sing} gives the
numerical values of \(\tau_{m}\) and \(\rho_{m}\) for \(m\) up to \(10\).
\begin{table}[h!]
    \centering
    \begin{tabular}{|>{$}c<{$}  >{$}c<{$} | >{$}c<{$}  >{$}c<{$}|} 
        \hline
        \tau_1 & 1.467890 & \tau_6 &    10.22570 \\
        \tau_2 & 2.158182 & \tau_7 &    15.13130 \\
        \tau_3 & 3.177848 & \tau_8 &    22.41257 \\
        \tau_4 & 4.685754 & \tau_9 &    33.22804 \\
        \tau_5 & 6.918003 & \tau_{10} & 49.30410 \\
        \hline
    \end{tabular}
    \quad
    \begin{tabular}{|>{$}c<{$}  >{$}c<{$} | >{$}c<{$}  >{$}c<{$}|} 
        \hline
        \rho_1 & 0.340625 & \rho_6 & 0.048896 \\
        \rho_2 & 0.231676 & \rho_7 & 0.033044 \\
        \rho_3 & 0.157339 & \rho_8 & 0.022309 \\
        \rho_4 & 0.106706 & \rho_9 & 0.015048 \\
        \rho_5 & 0.072275 & \rho_{10} & 0.010141 \\
        \hline
    \end{tabular}
    \medskip
    \caption{Numerical values of \(\tau_{m}\) and \(\rho_{m}\) for full
      binary trees.}
    \label{table:sing}
\end{table}
\vspace{-5ex}

\begin{remark}\label{Ralgebra}
It can be shown, using the equations above and taking resultants to
eliminate variables, that $\rho_1$, $\rho_2$ and $\rho_3$ are roots of the
equations
\begin{align}
    16\,\rho_1^{4}+24\,\rho_1^{2}-3&=0
    ,
    \label{P1}
    \\
    256\,\rho_2^{8}+2816\,\rho_2^{6}-32\,\rho_2^{4}+6384\,\rho_2^{2}-343
    &=0
    ,
    \label{P2}
    \\
    65536\,\rho_3^{16}+5111808\,\rho_3^{14}+70434816\,\rho_3^{12}
    -785866752\,\rho_3^{10}
    \quad&\notag
    \\+206968320\,\rho_3^{8}+10195628544\,\rho_3^{6}-16526908224\,\rho_3^{4}
    \quad&\notag
    \\+
    7520519520\,\rho_3^{2}-176201487
    &=0.
    \label{P3}
\end{align}
According to Maple, these polynomials are irreducible over the rationals; 
moreover, the polynomial in \eqref{P2} is irreducible over $\bbQ(\rho_1)$
and the polynomial in \eqref{P3} is irreducible over $\bbQ(\rho_1,\rho_2)$.
In particular, we have a strictly increasing sequence of fields 
$\bbQ\subset\bbQ(\rho_1)\subset\bbQ(\rho_1,\rho_2)\subset\bbQ(\rho_1,\rho_2,\rho_3)$.
We expect that this continues for larger $m$ as well, and that the fields
$\bbQ(\rho_1,\dots,\rho_m)$
form a strictly increasing sequence for $0\le m<\infty$.
\end{remark}

\begin{remark}\label{Rvar}
  The values in \eqref{bin:tau1}--\eqref{bin:tau2} show that $\tau_1^2<\tau_2$.
(In fact, $\tau_2/\tau_1^2\doteq 1.0016 $.) Hence \eqref{eq:mmt3} implies
that, as \ntoo,
\begin{equation}
\e{R(\ctn)^2} /\bigpar{\e{ R(\ctn)}}^2 \to \infty  
\end{equation}
and thus
\begin{equation}
  \Var[R(\ctn)] \sim \e{R(\ctn)^2}.
\end{equation}
We expect that the same holds for other  \cGWt{s}, but we have no
general proof.
\end{remark}

\begin{remark}\label{Rneq}
As said in \refR{Rmom}, it seems natural to combine Theorems \ref{TLN} and
\ref{Tmom} and guess that the moments of $R(\ctn)$ 
asymptotically are as
the moments of the asymptotic log-normal distribution in \refT{TLN};
this means equality in \eqref{??}.
However, if equality holds in \eqref{??} for $m=1,2,3$, then
\begin{equation}
  \tau_1^3\tau_2^{-3}\tau_3=e^{(3-6+3)\mu+(3-12+9)\gss/2}=1,
\end{equation}
and thus 
\begin{equation}\label{*tau*}
\tau_3=\tau_2^3\tau_1^{-3}.  
\end{equation}
Equivalently,
$\rho_3=\rho_2^3\rho_1^{-3}\rho_0$.
However, in the case of full binary trees, 
we have noted in \refR{Ralgebra} that
$\rho_3\notin\bbQ(\rho_1,\rho_2)=\bbQ(\rho_0,\rho_1,\rho_2)$,  
so \eqref{*tau*} is impossible. In fact, a numerical calculation, using the
values in Table~\ref{table:sing}, yields in this case
\begin{equation}
\tau_3\tau_2^{-3}\tau_1^{3}  
=
\rho_3\qw\rho_2^{3}\rho_1^{-3}\rho_0
\doteq
0.99988.
\end{equation}
\end{remark}

\subsection{Proof of Theorems \ref{Tmom}--\ref{Tmom2}}
\label{sec:mmt:proof}

Consider a general \(\Phi(z)\) which satisfies the condition of \refT{Tmom2}.
We define the
\emph{weighted generating function} for \(m\)-partially colored trees by
\begin{equation}\label{wFm}
    \sF_m(z)
    :=
    \sum_{T}\sum_{T'_1,\dots,T_m'\subseteq_{r} T} w(T)z^{|T|}  
    = 
    \sum_{T}
    w(T)
    R(T)^m z^{|T|}  
    ,
\end{equation}
where \(w(T)\) is the weight of \(T\) defined in \refS{sec:simple}.
(Note that in case of full binary trees in \refSS{sec:gf}, 
$w(T)=1$ and \eqref{wFm} agrees with \eqref{Fm}.)
Then we have
\begin{equation}
    \label{eq:mmt}
    \E{R(\ctn)^{m}}
    =
    \frac{
        \sum_{T:|T|=n}
        w(T)
        R(T)^m
    }{
        \sum_{T:|T|=n}
        w(T)
    }
    =
    \frac{
        [z^{n}]
        \sF_{m}(z)
    }{
        [z^{n}]
        \sF_{0}(z)
    }
    .
\end{equation}

Following exactly the same argument as in \refSS{sec:gf}, 
we have a system
of equations
\begin{equation}\label{eqFms}
    \sF_{m}(z) = z \Phi\left( \sum_{k=0}^{m} \binom{m}{k} \sF_{k}(z) \right),
    \qquad
   m=0,1,\dots
    .
\end{equation}
By induction and the implicit function theorem
\cite[Theorem~B.4]{Flajolet2009}, there exist for each $m$ a
function \(\sF_{m}(z)\) that 
is analytic in some neighborhood of \(0\) (depending on $m$)
and satisfies \eqref{eqFms} there.

For singularity analysis, we apply Theorem VII.3 of \cite{Flajolet2009}. We
need some preparations.
Define again $\FF_m(z)$ by \eqref{FFma}, and let
\begin{equation}\label{Hm}
H_m(z):=\FF_m(z)-F_m(z)=\sum_{k=0}^{m-1} \binom{m}{k} \sF_{k}(z)
,
\end{equation}
 and  
\begin{equation}
    \Psi_{m}(z,w) 
    \coloneqq
    z 
    \Phi
\bigpar{
        w +H_m(z)}
    .
    \label{gmh}
\end{equation}
Then the implicit equation \eqref{eqFms} can be written in the equivalent
forms
\begin{align}
  F_m(z)&=z\Phi\bigpar{\FF_m(z)}, \label{FF}
\\
  F_m(z)&=\Psi_m\bigpar{z,F_m(z)}.
\label{FG}  
\end{align}
Let $\rho_m>0$ be the radius of convergence of $F_m(z)$, and let
$s_m:=F_m(\rho_m)\le\infty$. 
We claim that $\infty>\rho_0>\rho_1>\dots$, and that for every $m$,
$s_m<\infty$ and
\begin{align}
    \frac{\partial \Psi_{m}}{\partial w}
    \left(
    {\rho_{m}, s_{m}}
    \right)
   & =
    1
    .
\label{eqgam'}
\end{align}
We prove this claim by induction. 
(The base case $m=0$ is well-known, 
see \cite[Theorem VI.6, p.~404]{Flajolet2009}, and follows by minor
modifications of the argument below.)
Note first that, by \eqref{FF}, $\FF_m(z)\le R$ when $0<z<\rho_m$, 
and thus, letting $z\upto\rho_m$,
\begin{equation}\label{leR}
  s_m+H_m(\rho_m) =F_m(\rho_m)+H_m(\rho_m)=\FF_m(\rho_m)\le R.
\end{equation}

Next,
by \eqref{gmh},
\begin{equation}\label{gmh'}
    \frac{\partial \Psi_{m}}{\partial w}(z,w)=z\Phi'\bigpar{w+H_m(z)}
    ,
\end{equation}
and, in particular,
\begin{equation}\label{gmh'ff}
    \frac{\partial \Psi_{m}}{\partial w}\bigpar{z,F_m(z)}=z\Phi'\bigpar{\FF_m(z)}.
\end{equation}

Since 
\(\sF_{m}(z)\) has only nonnegative
coefficients, it has a singularity at $\rho_m$.
This singularity can arise in one of three ways:
\begin{romenumerate}
\item \label{xa}
$\rho_m\ge\rho_{m-1}$. 

\item \label{xb}
$\FF_m(\rho_m)=s_m+H_m(\rho_m)= R$. 
(Recall \eqref{leR}.)

\item \label{xc}
\eqref{eqgam'} holds.
\end{romenumerate}
In fact, if neither \ref{xa} nor \ref{xb} holds, then $\rho_m<\infty$,
$s_m<\infty$ and 
$\Psi_m$ is analytic in a neighbourhood of $(\rho_m,s_m)$.
If also \ref{xc} does not hold, then $F_m(z)$ is analytic in a
neighbourhood of $\rho_m$ by \eqref{FG} and
the implicit function theorem, 
which
contradicts that \(F_{m}(z)\) has a singularity  at \(\rho_{m}\).

We will show that \ref{xa} and \ref{xb} are impossible; thus \ref{xc} is the
only possibility.

Differentiating \eqref{FG}, we obtain
\begin{equation}\label{fm'}
  F_m'(z)=\frac{\partial \Psi_m}{\partial z}\bigpar{z,F_m(z)}
+ \frac{\partial \Psi_m}{\partial w}\bigpar{z,F_m(z)} F_m'(z).
\end{equation}
For $0<z<\rho_m$, all terms in \eqref{fm'} are positive and finite; hence
$  F_m'(z)> \frac{\partial \Psi_m}{\partial w}\bigpar{z,F_m(z)} F_m'(z)$
and
\begin{equation}\label{d<1}
 \frac{\partial \Psi_m}{\partial w}\bigpar{z,F_m(z)}<1,
\qquad 0<z<\rho_m.
\end{equation}

Suppose now that \ref{xa} holds. Then $F_m(z)$ is analytic for
$|z|<\rho_{m-1}$.
Furthermore, by induction, $F_{m-1}(\rho_{m-1})=s_{m-1}<\infty$, and
$H_{m-1}(\rho_{m-1})<\infty$.
Hence,
$\FF_{m-1}(\rho_{m-1})=F_{m-1}(\rho_{m-1})+H_{m-1}(\rho_{m-1})<\infty$.
This and  the definition \eqref{FFma} yield
$\FF_m(\rho_{m-1})>\FF_{m-1}(\rho_{m-1})$,
and thus, using \eqref{gmh'ff},
\begin{equation}\label{sno}
  \begin{split}
&    \lim_{z\upto\rho_{m-1}}
  \frac{\partial \Psi_m}{\partial w}\bigpar{z,F_m(z)}
=\rho_{m-1}\Phi'\bigpar{\FF_m(\rho_{m-1})}
\\& \qquad
>\rho_{m-1}\Phi'\bigpar{\FF_{m-1}(\rho_{m-1})}
=  \frac{\partial \Psi_{m-1}}{\partial w}\bigpar{\rho_{m-1},F_{m-1}(\rho_{m-1})}
=1,
  \end{split}
\end{equation}
by the induction hypothesis \eqref{eqgam'} for $m-1$.
However, \eqref{sno} contradicts \eqref{d<1}. Hence, \ref{xa} cannot hold,
and $\rho_m<\rho_{m-1}$.

Next, for $0<z<\rho_m$, by \eqref{FF}, \eqref{gmh'ff} and \eqref{d<1},
\begin{equation}\label{gagnef}
  \begin{split}
\frac{\FF_m(z)\Phi'(\FF_m(z))}{\Phi(\FF_m(z))}    
=
\frac{z\FF_m(z)\Phi'(\FF_m(z))}{F_m(z)}    
=
\frac{\FF_m(z)}{F_m(z)}     \frac{\partial \Psi_m}{\partial w}\bigpar{z,F_m(z)}
<
\frac{\FF_m(z)}{F_m(z)}
.
  \end{split}
\end{equation}
Since $F_k(z)\le F_m(z)$ when $0\le k\le m$ by \eqref{wFm},
the \rhs{} of \eqref{gagnef} is by \eqref{FFma} bounded by $2^m$.

Suppose now that \ref{xb} holds.
Then, as $z\upto\rho_m$, 
\begin{equation}
  \label{bisp}
\FF_m(z)\to \FF_m(\rho_m)=R,   
\end{equation}
and thus
\eqref{gagnef} implies
\begin{equation}\label{djurmo}
  \begin{split}
\lim_{\zeta\upto R}
\frac{\zeta\Phi'(\zeta)}{\Phi(\zeta)}    
\le
\limsup_{z\upto \rho_m}
\frac{\FF_m(z)}{F_m(z)}
\le 2^m.
  \end{split}
\end{equation}
Consider now two cases. First, if $\Phi(R)<\infty$, then the \lhs{} of
\eqref{djurmo} is $R\Phi'(R)/\Phi(R)=\infty$
by the assumption \eqref{philmx}, 
which is a contradiction.
On the other hand, if $\Phi(R)=\infty$, then \eqref{FF} and \eqref{bisp}
yield
\begin{equation}\label{tuna}
\lim_{z\upto\rho_m} F_m(z) 
=\lim_{z\upto\rho_m}z\Phi\bigpar{\FF_m(z)} 
=\rho_m \Phi(R)=\infty.
\end{equation}
We have shown that $\rho_m<\rho_{m-1}\le\rho_k$ for every $k < m$, and thus
\eqref{Hm} shows that $H_m$ is analytic at $\rho_m$, and
$H_m(\rho_m)<\infty$. Hence, in this case \eqref{djurmo} yields, using
\eqref{tuna},
\begin{equation}\label{djuras}
  \begin{split}
\lim_{\zeta\upto R}
\frac{\zeta\Phi'(\zeta)}{\Phi(\zeta)}    
\le
\limsup_{z\upto \rho_m}
\frac{F_m(z)+H_m(z)}{F_m(z)}
= 1+\limsup_{z\upto \rho_m}\frac{H_m(z)}{F_m(z)}
=1,
  \end{split}
\end{equation}
which contradicts the assumption \eqref{philm}.
We have thus reached a contradiction in both cases, 
which shows that \ref{xb} cannot hold, so
\begin{equation}
  \label{zebra}
\FF_m(\rho_m)=s_m+H_m(\rho_m)<R.
\end{equation}

Hence, \ref{xc} holds. Furthermore, by \eqref{zebra}, $s_m<\infty$, and letting
$z\upto\rho_m$ in \eqref{FG} yields
\begin{equation}
      \label{eqgam0}
    \Psi_{m}(\rho_{m}, s_{m})= s_{m}
    .
\end{equation}

We now apply  \cite[Theorem VII.3, p.~468]{Flajolet2009}, noting that
the conditions are satisfied by the results above, in particular
\eqref{eqgam0}, \eqref{eqgam'} and \eqref{zebra}.
This theorem shows that
\(\sF_{m}(z)\) has a square-root singularity at \(\rho_{m}\), and that
its coefficients satisfy
\begin{equation}
    [z^{n}] \sF_{m}(z)
    =
    \frac{
        \lambda_{m}
    }{
        \sqrt{n^{3}}{}
    }
        \rho_{m}^{-n}
    \left( 
        1
        +
        O({n^{-1}})
    \right)
    ,
    \label{sFco}
\end{equation}
where \(\lambda_{m}>0\) is a constant.
(In the periodic case, as usual we consider only $n$ such that $\cT_n$ exists.)
It follows from \eqref{eq:mmt} that
\begin{equation}
    \label{eq:mmt1}
    \E{R(\ctn)^{m}}
    =
    \frac{
        [z^{n}]
        \sF_{m}(z)
    }{
        [z^{n}]
        \sF_{0}(z)
    }
    =
    \frac{
        \lambda_{m}
    }{
        \lambda_{0}
    }
    \left( 
        \frac{
            \rho_{0}
        }{
            \rho_{m}
        }
    \right)^{m}
    \left( 
        1
        +
        O
        \left( 
            n^{-1}
        \right)
    \right)
    .
\end{equation}
Letting \(\gamma_{m}=\lambda_{m}/\lambda_{0}\) and
\(\tau_{m}=\rho_{0}/\rho_{m}\), we have shown \eqref{eq:mmt3}.

This prove \refT{Tmom2}, and thus also the equivalent \refT{Tmom}.
\qed

\subsection{A counter example}\label{SScounter}

The following example shows that \refT{Tmom2} does not hold without the
condition \eqref{philmx}.

\begin{example}\label{Ecounter}
  Take the generator
  \begin{equation}\label{fia}
\Phi(z)=   \Phi_a(z)=a+\frac{1-a}{\zeta(4)}\sumk \frac{z^k}{k^4},
  \end{equation}
where $0<a<a_0:=1-\zeta(4)/\zeta(3)$.
Then $R=1$, $\Phi(R)=1$ and
\begin{equation}\label{nu=}
  \nu 
:=\lim_{z \upto R} \frac{z \Phi'(z)}{\Phi(z)}
= \Phi'(1)=(1-a)\frac{\zeta(3)}{\zeta(4)}=\frac{1-a}{1-a_0}>1,
\end{equation}
so \eqref{philm} holds.

Suppose now that there exists $\rho_1<1$ such that $s_1:=F_1(\rho_1)<\infty$
and $\frac{\partial \Psi_1}{\partial w}(\rho_1,s_1)=1$, and thus,
see \eqref{gmh'ff},
\begin{equation}\label{r9}
  \rho_1 \Phi'\bigpar{F_0(\rho_1)+F_1(\rho_1)}=1.
\end{equation}
Then $F_0(\rho_1)+F_1(\rho_1)\le R=1$. Since $F_0(z)\le F_1(z)$ for every
$z\ge0$, this implies
\begin{equation}\label{trna}
F_0(\rho_1)\le \tfrac12.
\end{equation}
On the other hand, 
$\Phi'\bigpar{F_0(\rho_1)+F_1(\rho_1)}\le \Phi'(1)=\nu$, and thus
\eqref{r9} implies $\rho_1\ge\nu\qw$. 
Furthermore, $F_0(z)=z\Phi\bigpar{F_0(z)}$. Thus, if $x:=F_0(\rho_1)\le\frac12$,
we have $x=\rho_1 \Phi(x)$, and thus
\begin{equation}
  x=\rho_1\Phi(x)\ge\nu\qw\Phi(x)
  ,
\end{equation}
which yields, recalling \eqref{nu=},
\begin{equation}\label{mrna}
\Phi(x)=  \Phi_a(x)\le \nu x = \frac{1-a}{1-a_0}x.
\end{equation}
We claim that this is impossible if $a$ is close to $a_0$. 
In fact, suppose that for every $a<a_0$ there exists $x=x_a\le\frac12$ such
that \eqref{mrna} holds. Then, by compactness, we may take a sequence
$a_n\upto a_0$ such that
$x_{a_n}$ converges to some $x_*\in[0,\frac12]$, and then \eqref{mrna} implies
\begin{equation}\label{rrna}
 \Phi_{a_0}(x_*)\le  x_*,
\end{equation}
which is a contradiction since $\Phi_{a_0}(1)=1$ and
$\Phi'_{a_0}(x)<\Phi'_{a_0}(1)=1$ for $x_*<x<1$.

Consequently, we can find $a<a_0$ such that the simply generated tree with
generator \eqref{fia} does not have $F_1$ with a singularity of the type
above.
Hence, in this case, $\rho_1$ is instead given by
\ref{xb} in \refSS{sec:mmt:proof}, i.e.,
$\FF_1(\rho_1)=1$,
which by \eqref{FF}
implies
\begin{align}
F_1(\rho_1)&=\rho_1\Phi\bigpar{\FF_1(\rho_1)}=\rho_1,
\\
F_0(\rho_1)&=\FF_1(\rho_1)-F_1(\rho_1)=1-\rho_1.
\end{align}
We have shown that
$\frac{\partial \Psi_1}{\partial w}\bigpar{\rho_1,F_1(\rho_1)}<1$, and thus
it follows from \eqref{fm'} and $\Phi'(1)<\infty$
that $\lim_{z\upto \rho_1}F_1'(z)<\infty$.
Hence the singularity of $F_1$ at $\rho_1$ is not of  square root type,
and  the asymptotic formula
\eqref{eq:mmt3} cannot hold.

We leave it as an open problem to find the asymptotics of $\E R(\ctn)$ and
higher moments in this case.
\end{example}

\section{General subtrees}
\label{sec:gen}

We have in \refSs{sec:mmt} considered root subtrees.
Estimates for general non-fringe subtrees follow from \eqref{SR2},
but more precise results can be obtained
by introducing the corresponding generating functions
\begin{equation}\label{wGm}
    G_m(z)
    :=
    \sum_{T} w(T)S(T)^m z^{|T|}
=  \sum_{T}\sum_{T'_1,\dots,T_m'\subseteq T} w(T)z^{|T|}
,
\end{equation}
\cf{} \eqref{wFm} and note that $G_0(z)=F_0(z)$.

For simplicity, we study first the case $m=1$ in detail, and
as in \refS{sec:mmt}, we consider first the example of full binary
trees.
We assume throughout this section that the  assumptions of \refT{Tmom2} hold.

\subsection{The mean, full binary trees}
Let \(\cG_{1}\) be the combinatorial class of
pairs of trees \( (T, T')\) such that \(T'\) is subtree of \(T \in
\cF_{0}\). In other words, an
element of \(\cG_{1}\) is a full binary tree with one non-fringe subtree
colored.  Such a partially
colored tree is either a full binary with a root subtree colored (an element of \(\cF_{1}\)), or a uncolored root
together with a left (right) uncolored subtree (an element of \(\cF_{0}\)) and a partially colored
right (left) subtree (an element of \(\cG_{1}\)).
Thus \(\cG_{1}\) has the specification
\begin{equation}
    \label{eq:G1}
    \cG_{1} =
    \cF_{1}
    +
    \cZ \times \cF_{0} \times \cG_{1}
    +
    \cZ \times \cG_{1} \times \cF_{0}
    .
\end{equation}
Therefore, \(G_{1}(z)\), the generating function of \(\cG_{1}\)
given by
\begin{equation}
    G_1(z)
    \coloneqq
    \sum_{T} \sum_{T' \subseteq T}
    z^{|T|}
    =
    \sum_T S(T)z^{|T|}
    =: \sumn \bi_n z^n
    ,
\end{equation}
satisfies
\begin{equation}\label{G1a}
    G_1(z)=F_1(z)+2zF_0(z)G_1(z)
    .
\end{equation}
Thus
\begin{equation}\label{G1b}
    G_1(z)=\frac{F_1(z)}{1-2zF_0(z)}.
\end{equation}

By \eqref{G1b}, $G_1(z)$ has the same
radius of convergence $\rho_1$ as
\(F_{1}(z)\), with singularities at the same points \(\pm \rho_{1}\).
(It is easily verified that the denominator $1-2zF_0(z)\neq0$ for
$|z|\le\rho_1$, see also \eqref{nora}.)
Since the singular expansions of the denominator at \(\pm \rho_{1}\) are simply
both \(1-2 \rho_{1} F_{0}(\rho_{1})\), we obtain from \eqref{G1b}
by singularity analysis
\begin{equation}
    \bi_n \sim (1-2\rho_1 F_0(\rho_1))\qw\ai_n
    ,
\end{equation}
as $n\to\infty$. Thus, using \eqref{F0x} and \eqref{rho1a},
\begin{equation}
    \label{eq:ab}
    \ai_n/  \bi_n
    \to 1-2\rho_1 F_0(\rho_1)
    =\sqrt3-1
\doteq0.732.
\end{equation}
Therefore the root subtrees form the majority of all subtrees.

\subsection{The mean, general trees}

For simply generated trees with the generator \(\Phi(x)\),
\eqref{eq:G1} and \eqref{G1a} can be generalized to
\begin{equation}
    \cG_{1} = \cF_{1} + \cZ \times \cG_{1} \times \Phi'(\cF_{0}),
    \label{eq:G1:1}
\end{equation}
and
\begin{equation}
    G_{1}(z)
    =
    F_{1}(z)
    +
    z
    G_{1}
    (z)
    \Phi'(
    F_{0}(z)
    )
    .
    \label{eq:G1:3}
\end{equation}
Therefore,
\begin{equation}\label{G1c}
    G_1(z)=\frac{F_1(z)}{1-z\Phi'(F_0(z))}.
\end{equation}
Note that for any $m\ge0$ and $0<z<\rho_m$,
by  \eqref{gmh'ff} and \eqref{eqgam'},
  \begin{equation}\label{nora}
    z \Phi'\bigpar{\cFF_m(z)}
<
    \rho_m \Phi'\bigpar{\cFF_m(\rho_m)}
= \frac{\partial \Psi_{m}}{\partial w}\left({\rho_{m}, s_{m}}\right)
=1.
  \end{equation}
Together with \(F_{0}(z) = \cFF_{0}(z)\), 
this shows that
the denominator in \eqref{G1c} is non-zero for $|z|<\rho_0$, and in
particular for $|z|\le\rho_1$.
Thus
$G_1(z)$ has the same dominant singularities with $|z|=\rho_1$ as
$F_1(z)$, and it follows that, as \ntoo,
\begin{equation}\label{ts2}
  \frac{\E R(\ctn)}{\E S(\ctn)}
=\frac{\ai_n}{\bi_n}
\to 1-\rho_1 \Phi'(F_0(\rho_1))>0.
\end{equation}
\begin{remark}
  \label{Rperiodic}
In the periodic case, when $F_1(z)$ has
$k\ge1$ singularities on the circle $|z|=\rho_1$, it is easily verified that
$1-z \Phi'\bigpar{\FF_1(z)}$ is a power series in $z^k$ and thus
has the same value at all these singularities,
\cf{} the full binary case above.
\end{remark}

\subsection{Higher moments}
The generating functions $G_m$
for higher moments of $S(\ctn)$ can be found
recursively by similar methods. The recursion becomes more complicated than
for $F_m$, however.
We introduce the generating functions for mixed moments of the numbers of
root subtrees and general subtrees
\begin{equation}
  G_{m,\ell}
:=\sum_{T} w(T)S(T)^m R(T)^\ell z^{|T|}
=  \sum_{T}\sum_{\substack{T'_1,\dots,T_m'\subseteq T,\\ T''_1,\dots,T_\ell''\subseteq_r T} }
w(T)z^{|T|}
.
\end{equation}
Note that $G_{m,0}=G_m$ and $G_{0,\ell}=F_\ell$.
It follows from \eqref{SR2} (or the recursions below)
that $G_{m,\ell}(z)$ has the same radius of
convergence $\rho_{m+\ell}$ as $F_{m+\ell}(z)$.

Consider first, as examples, the cases with $m+\ell=2$.
$G_{1,1}(z)$ is the generating function of the combinatorial class
$\cG_{1,1}$
consisting of triples $(T,T',T'')$ where $T'$ is a subtree and $T''$ a root
subtree of $T$, counted with weights $w(T)$ determined by $\Phi(z)$ by
\eqref{weight} and \eqref{gen:phi}.

Let $(T,T',T'')\in\cG_{1,1}$.
Denote the children of the root $o\in T$ by $v_1,\dots,v_D$, and let
$T_{1},\dots,T_{D}$ be the corresponding fringe subtrees of $T$.
The subtree $T'$ is either a root subtree, and then $(T,T',T'')\in\cF_2$,
or it is a subtree of one of the fringe trees $T_{j}$.
Furthermore, the root subtree $T''$ is determined by choosing for each
fringe subtree $T_{j}$ either  a root subtree or nothing.
Hence, in the case $(T,T',T'')\notin\cF_{2}$,
for some $j_0\le D$, we choose either
$(T_{j_0},T'_{j_0},T''_{j_0})\in \cG_{1,1}$ or
$(T_{j_0},T'_{j_0})\in \cG_{1,0}$; at the same time, we choose for each $j\neq
j_0$ either $(T_j,T''_j)\in \cG_{0,1}=\cF_1$ or just $T_j\in\cG_{0,0}=\cF_0$.
Consequently,
\begin{equation}\label{GQ}
  \cG_{1,1}=\cF_2+ \cZ\times\sum_{D=1}^\infty w_D D \bigpar{\cG_{1,1}+\cG_{1,0}}\times
\bigpar{\cF_1+\cF_0}^{D-1}
,
\end{equation}
and thus
\begin{equation}\label{G11}
  G_{1,1}(z)=
  F_2(z)+ 
  z \bigpar{G_{1,1}(z)+G_{1,0}(z)}\Phi'\bigpar{\FF_1(z)}
  .
\end{equation}
Consequently,
\begin{equation}\label{G11+}
  G_{1,1}(z)=\frac{F_2(z)+ zG_{1,0}(z)\Phi'\bigpar{\FF_1(z)}}
  {1-z\Phi'\bigpar{\FF_1(z)}}
  .
\end{equation}

Similarly, $\cG_{2,0}$ is the class of triples $(T,T''_1,T''_2)$ where both
$T''_1$ and $T''_2$ are general subtrees of $T$. The case when both $T''_1$
and $T''_2$ are root trees gives $\cF_2$, and the case where, say,
$T''_1$, is a root tree but $T''_2$ is not gives $\cG_{1,1}\setminus\cF_2$,
found above.
Finally, if neither $T''_1$ nor $T''_2$ is a root tree, then they are
determined by one subtree in a fringe tree $T_{j_1}$ and one subtree in $T_{j_2}$,
where $j_1$ and $j_2$ may be equal or not.
This leads to, arguing as in \eqref{GQ}--\eqref{G11},
\begin{equation}\label{G20}
  \begin{split}
        G_{2,0}(z) 
        &
        =
        F_2(z)+2\bigpar{G_{1,1}(z)-F_2(z)}
        \\
        &\qquad
        +zG_{2,0}(z)\Phi'\bigpar{F_0(z)}
        +zG_{1,0}(z)^2\Phi''\bigpar{F_0(z)},
  \end{split}
\end{equation}
and thus
\begin{equation}\label{G20+}
  G_{2,0}(z)=
\frac{2G_{1,1}(z)-F_2(z)+zG_{1,0}(z)^2\Phi''\bigpar{F_0(z)}}
{1-z\Phi'\bigpar{F_0(z)}}.
\end{equation}

Singularity analysis of
\eqref{G11+} and \eqref{G20+} show that
\begin{align}
  \frac{\E[S(\ctn)R(\ctn)]}{\E[R(\ctn)^2]}
&=\frac{[z^n] G_{1,1}(z)}{[z^n]F_2(z)}
\to 
\frac{1}{1-\rho_2\Phi'(\FF_1(\rho_2))},
\label{alpha11}
\\
  \frac{\E[S(\ctn)^2]}{\E[R(\ctn)^2]}
&=\frac{[z^n] G_{2,0}(z)}{[z^n]F_2(z)}
\to 
\frac{1}{1-\rho_2\Phi'(F_0(\rho_2))}
\Bigpar{\frac{2}{1-\rho_2\Phi'(\FF_1(\rho_2))}-1},
\label{alpha20}
\end{align}
where all denominators are positive by \eqref{nora}.

The argument is easily extended to higher powers, and in principle can any
$G_{m,\ell}(z)$  be found recursively by this method; however, the formulas
will be more and more complicated, and we see no simple general formula.
On the other hand, we are really only interested in the singular parts, and
thus we can ignore most terms.

\begin{lemma}\label{LS}
For each $m,\ell\ge0$,
  there exist functions $\gf_{m,\ell}$ and $\psi_{m,\ell}$ that are analytic
 for    \(|z|<\rho_{m+\ell-1}\)
such that 
  \begin{equation}
    G_{m,\ell}(z)=\gf_{m,\ell}(z)F_{m+\ell}(z)+\psi_{m,\ell}(z).
  \end{equation}
Furthermore,
$\ga_{m,\ell}:=\gf_{m,\ell}(\rho_{m+\ell})$ satisfies the recursion
$\ga_{0,\ell}=1$ and, for $m\ge1$, 
\begin{align}\label{ga}
  \ga_{m,\ell}
=
\frac{1
+ \rho_{m+\ell} \sum_{k=1}^{m-1}\binom{m}{k}\ga_{k,m+\ell-k}
\Phi'\bigpar{\FF_{m+\ell-k}(\rho_{m+\ell})}}
{1-\rho_{m+\ell}\Phi'\bigpar{\FF_\ell(\rho_{m+\ell})}}
.
\end{align}
\end{lemma}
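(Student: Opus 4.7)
The plan is to prove the lemma by induction on $m$, extending the derivations leading to \eqref{G11+} and \eqref{G20+}. The base case $m=0$ is immediate from the definition \eqref{wGm}, which gives $G_{0,\ell}(z)=F_\ell(z)$; take $\phi_{0,\ell}\equiv 1$ and $\psi_{0,\ell}\equiv 0$, so $\alpha_{0,\ell}=1$.

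For the inductive step, I would classify each tuple in $\cG_{m,\ell}$ by the set $A\subseteq[m]$ of indices $i$ for which $T_i'$ contains the root, and set $k:=|A|$. When $k=m$, every $T_i'$ is a root subtree and the tuple is naturally an element of $\cF_{m+\ell}$, contributing $F_{m+\ell}(z)$. When $k<m$, the $m-k$ non-root general subtrees must each lie entirely in one fringe subtree at a child of the root, while each of the $k+\ell$ root-like subtrees (those in $A$ together with the $\ell$ root subtrees $T''_j$) independently decides, at each child, whether to extend into that fringe subtree. Writing
\begin{equation*}
\widehat G_{q,s}(z):=\sum_{p=0}^s\binom{s}{p}G_{q,p}(z)
\end{equation*}
for the generating function of trees carrying $q$ labeled general subtrees and an arbitrary subset of $s$ labeled root-like colors, summing over $A$, over the root degree $D$, and over the multinomial distribution of the non-root generals among the $D$ children produces the recursion
\begin{equation*}
G_{m,\ell}(z)=F_{m+\ell}(z)+\sum_{k=0}^{m-1}\binom{m}{k}z\sum_{D\ge 1}w_D\sum_{q_1+\cdots+q_D=m-k}\binom{m-k}{q_1,\ldots,q_D}\prod_{j=1}^D\widehat G_{q_j,k+\ell}(z),
\end{equation*}
which specializes to \eqref{eq:G1:3} and \eqref{G11} in the cases $(m,\ell)=(1,0)$ and $(1,1)$.

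Next I would extract the singularity at $\rho_{m+\ell}$. Each $\widehat G_{q,s}(z)$ has smallest singularity at $\rho_{q+s}$, so in any product $\prod_j\widehat G_{q_j,k+\ell}(z)$ with $\sum q_j=m-k$, a singularity at $\rho_{m+\ell}$ can appear only when $\max q_j=m-k$, i.e.\ exactly one $q_{j_0}=m-k$ and all other $q_j=0$; for every other tuple one has $\max_j(q_j+k+\ell)\le m+\ell-1$, hence the product is analytic in the larger disc $|z|<\rho_{m+\ell-1}$. Using $\widehat G_{0,k+\ell}(z)=\FF_{k+\ell}(z)$ and summing over the distinguished child $j_0$ and over $D$ turns the singular contribution of the inner sum for fixed $k$ into $z\Phi'(\FF_{k+\ell}(z))\widehat G_{m-k,k+\ell}(z)$. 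Isolating the $k=0$ term, which produces $z\Phi'(\FF_\ell(z))G_{m,\ell}(z)$ on the right, and moving it to the left yields
\begin{equation*}
\bigpar{1-z\Phi'(\FF_\ell(z))}G_{m,\ell}(z)=F_{m+\ell}(z)+\sum_{k=1}^{m-1}\binom{m}{k}z\Phi'(\FF_{k+\ell}(z))G_{m-k,k+\ell}(z)+R(z),
\end{equation*}
where $R(z)$ collects all non-singular remainders and is analytic in $|z|<\rho_{m+\ell-1}$.

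To conclude, apply the inductive hypothesis to each $G_{m-k,k+\ell}(z)=\phi_{m-k,k+\ell}(z)F_{m+\ell}(z)+\psi_{m-k,k+\ell}(z)$ on the right (note $(m-k)+(k+\ell)=m+\ell$, so all these share the dominant singularity $\rho_{m+\ell}$) and divide by $1-z\Phi'(\FF_\ell(z))$. This divisor is nonzero throughout $|z|<\rho_\ell$: the inequality $\rho\Phi'(\FF_\ell(\rho))<1$ holds for real $0<\rho<\rho_\ell$ by \eqref{nora}, and extends to complex $z$ via the elementary estimate $|z\Phi'(\FF_\ell(z))|\le|z|\Phi'(\FF_\ell(|z|))<1$, which relies on the nonnegativity of the Taylor coefficients of $\FF_\ell$. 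Since $\rho_\ell\ge\rho_{m+\ell-1}$, the resulting $\phi_{m,\ell}$ and $\psi_{m,\ell}$ are analytic in $|z|<\rho_{m+\ell-1}$. Evaluating $\phi_{m,\ell}$ at $\rho_{m+\ell}$ and reindexing the sum via $k\leftrightarrow m-k$ reproduces \eqref{ga}. The principal obstacle is the combinatorial bookkeeping in the first step: one must verify that every summand other than the distinguished ``$q_{j_0}=m-k$'' configuration is analytic throughout the disc $|z|<\rho_{m+\ell-1}$, rather than merely at the single point $\rho_{m+\ell}$, so that $R(z)$ can be absorbed into $\psi_{m,\ell}$.
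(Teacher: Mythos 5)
Your proof is essentially the paper's argument, in slightly different bookkeeping: you index on the set $A$ of colours whose general subtree is a root subtree and abbreviate with $\widehat G_{q,s}$, while the paper fixes the number $k$ of non-root colours and the positions $j_1,\dots,j_p$ that receive them. The one step you flag but do not carry out --- showing that the remainder $R(z)$, a priori an infinite sum over the root degree $D$, is actually analytic in the full disc $|z|<\rho_{m+\ell-1}$ --- the paper closes by grouping terms according to the composition $(k_1,\dots,k_p)$ of nonzero $q_j$'s and summing over $D$ for each composition to produce the finite closed form \eqref{sala}, a product of functions analytic in that disc with $\Phi^{(p)}\bigpar{\FF_{m+\ell-k}(z)}$ as one factor.
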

\begin{proof}
The case $m=0$ is trivial, with $\gf_{0,\ell}(z)=1$ and
$\psi_{0,\ell}(z)=0$.
Thus, let $m\ge1$.
The combinatorial class $\cG_{m,\ell}$ consists of sequences
\begin{equation}
(T,T'_1,\dots,T'_m,T''_1,\dots,T''_\ell),
\end{equation}
where $T$ is a tree, counted with
weight $w(T)$, each $T'_i$ is a subtree and each $T''_j$ is a root subtree.
Let $k$ be the number of $T'_1,\dots,T'_m$ that are not root subtrees.
The case $k=0$ gives $\cF_{m+\ell}$.
Suppose $1\le k\le m$.
Let again  $T_1,\dots,T_D$ be the fringe trees rooted at the children of
the root $o$ of $T$.
Further suppose that the \(k\) non-root subtrees go into \(p \ge 1\) of
 \(T_{1}, \cdots, T_{D}\), 
    which we call \(T_{j_1},\cdots,T_{j_p}\). 
Then there are \(\binom{D}{p}\) ways to select
    \(T_{j_{1}},\cdots,T_{j_{p}}\).  
Suppose further that \(k_{i}\) of the \(k\) non-root subtrees
    go into \(T_{j_{i}}\). 
Given \(p\) positive integers \(k_{1},\cdots,k_{p}\) with
    \(k_{1}+\cdots+k_{p}=k\),
    there are
    \(\binom{k}{k_{1},\cdots,k_{p}}\) ways of choosing how the \(k\)
    non-root subtrees are divided among \(T_{j_{i}},\cdots,T_{j_{p}}\). 
While \(T_{j_{r}}\) contains \(k_{r}\)
    marked general subtrees, it also contains  
\(i\le m+\ell-k\) marked root subtrees, 
which can be chosen in \( \binom{m+\ell-k}{i}\) ways.
    For any \(T_{j}\) such that \(j \notin \{j_{1},\cdots,j_{p}\}\), it too
contains up to
    \(m+\ell-k\) marked root subtrees. Hence, fixing \(k\), \(p\),
    \(k_{1},\cdots,k_{p}\), \(D\), we have the following term that contributes to
    \(G_{m,\ell}(z)\)
\begin{align}\label{sala:1}
z  w_{D} \binom{D}{p} \binom{k}{k_1,\dots,k_p}\prod_{j=1}^p
  \lrpar{\sum_{i=0}^{m+\ell-k}\binom{m+\ell-k}{i}G_{k_j,i}(z)}
\bigpar{\FF_{m+\ell-k}(z)}^{D-p}.
\end{align}
Summing over \(D \ge 1\) gives
\begin{align}\label{sala}
z  \frac{1}{p!}\binom{k}{k_1,\dots,k_p}\prod_{j=1}^p
  \lrpar{\sum_{i=0}^{m+\ell-k}\binom{m+\ell-k}{i}G_{k_j,i}(z)}
\Phi^{(p)}\bigpar{\FF_{m+\ell-k}(z)}.
\end{align}
In \eqref{sala}, $m+\ell-k\le m+\ell-1$, and thus 
$\Phi^{(p)}\bigpar{\FF_{m+\ell-k}(z)}$ has radius of convergence at least
$\rho_{m+\ell-1}$.
Similarly, $k_j+i\le m+\ell$ with equality only if $k_j=k$, and thus
$j=p=1$,
and also $i=m+\ell-k$; hence, except in the latter case,
\eqref{sala} has radius of convergence $\ge\rho_{m+\ell-1}$.
Consequently,
collecting all terms and lumping most of them together,
taking into account that there are \(\binom{m}{k}\) ways to choose the \(k\)
non-root     subtrees among \(T_{1}',\cdots,T_{m}'\),
\begin{align}
    G_{m,\ell}(z)
    =F_{m+\ell}(z)
    + z \sum_{k=1}^m \binom{m}{k}G_{k,m+\ell-k}(z)\Phi'\bigpar{\FF_{m+\ell-k}(z)}
    + \tphi(z),
    \label{Gml}
\end{align}
where $\tphi(z)$ has radius of convergence $\ge\rho_{m+\ell-1}$.
Hence, for any $m\ge1$,
\begin{align}
  G_{m,\ell}(z)
=
\frac{F_{m+\ell}(z)
+ z \sum_{k=1}^{m-1}\binom{m}{k}G_{k,m+\ell-k}(z)\Phi'\bigpar{\FF_{m+\ell-k}(z)}
+\tphi(z)}
{1-z\Phi'\bigpar{\FF_\ell(z)}},
\end{align}
where the denominator is non-zero for $|z|<\rho_{m+\ell-1}\le\rho_{\ell}$ 
by \eqref{nora}.
The results follow by induction in $m$.
\end{proof}

\begin{example}
    As an example of the recursion \eqref{Gml}, note that by \eqref{G11} and \eqref{G20},
    \begin{equation}\label{G20:1}
        \begin{split}
            G_{2,0}(z) 
            &
            =
            F_2(z)+
            2 z G_{1,1}(z)\Phi'\bigpar{\FF_1(z)}
            +2 z G_{1,0}(z)\Phi'\bigpar{\FF_1(z)}
            \\
            &\qquad
            +zG_{2,0}(z)\Phi'\bigpar{F_0(z)}
            +zG_{1,0}(z)^2\Phi''\bigpar{F_0(z)},
        \end{split}
    \end{equation}
where we collect the terms containing $G_{1,0}(z)$ into $\tphi(z)$ in
\eqref{Gml}.
\end{example}

\begin{proof}[Proof of \refT{TS}]
An immediate consequence of \refL{LS}
and singularity analysis, 
arguing as at the end of the proof of \refT{Tmom2} in \refS{sec:mmt:proof},
\cf{} \eqref{sFco}--\eqref{eq:mmt1}.
Note that this yields
\begin{align}\label{gamma'}
\gamma'\ellm=\ga_{m,\ell}\gamma_{\ell+m}
,
\end{align}
with $\ga_{m,\ell}$ given by the recursion \eqref{ga}.
\end{proof}

\subsection{A numerical example}

It follows either from \refL{LS} or the direct computations in 
\eqref{G1c}, \eqref{alpha11}, \eqref{alpha20} that
\begin{align}
    \alpha_{1,0} 
    &
    =
    \frac{1}{1-\rho_1\Phi'(F_0(\rho_1))}
    ,
    \\
    \alpha_{1,1} 
    &
    =
    \frac{1}{1-\rho_2\Phi'(\FF_1(\rho_2))},
    \\
    \alpha_{2,0} 
    &
    =
    \frac{1}{1-\rho_2\Phi'(F_0(\rho_2))}
    \Bigpar{\frac{2}{1-\rho_2\Phi'(\FF_1(\rho_2))}-1}.
    \label{alpha20'}
\end{align}
Returning again to the full binary trees, we find from
\eqref{gamma'}--\eqref{alpha20'} by Maple 
\begin{align}
    \alpha_{0,1} &\doteq 1.366025,
    &
    \alpha_{1,1} &\doteq 1.339117,
    &
    \alpha_{2,0} &\doteq 1.893755,
    \label{alpha:bin}
    \\
    \gamma_{0,1}' &\doteq 2.101204,
    &
    \gamma_{1,1}' &\doteq 3.160952,
    &
    \gamma_{2,0}' &\doteq 4.470213.
\end{align}
Hence,  recalling \refR{Rvar},
we find the correlation coefficient between the numbers of root
subtrees and general subtrees
\begin{multline}
\frac{\Cov(S(\ctn), R(\ctn))}{\sqrt{\Var{S(\ctn)}\Var{R(\ctn)}}}
\sim
\frac{\e{S(\ctn) R(\ctn)}}{\sqrt{\e{S(\ctn)^2}\e{R(\ctn)^2}}}
\\
\sim 
\frac{\gamma'_{1,1}\tau_2^n}{\sqrt{\gamma'_{2,0}\tau_2^n\cdot\gamma_2\tau_2^n}}    
=\frac{\gamma'_{1,1}}{\sqrt{\gamma'_{2,0}\gamma_2}}
=\frac{\alpha_{1,1}}{\sqrt{\alpha_{2,0}}}
\doteq 0.973087.     
\end{multline}
Therefore, as might be expected, we have a strong but not perfect correlation.

\section{Average size of root subtrees}
\label{sec:size}

 Define
\begin{equation}\label{Fw1}
F_1(z,u):= \sum_{T}\sum_{T'\subseteq_{r} T} z^{|T|}  u^{|T'|}
=:\sumn \ai_n(u) z^n.
\end{equation}
Then $F_1(z)=F_1(z,1)$ and thus $\ai_n=\ai_n(1)$.
Moreover $\ai_n(u)/\ai_{n}$ is the probability generating function of \(X_{n}\),
where \(X_{n} = |T'|\) for a pair \( (T,T')\) where
$T'$ is a  root subtree of $T$,
chosen uniformly at random 
from all such pairs in \(\cF_{1}\) with \(|T|=n\).

$F_1(z,u)$ can be computed in the same way as in \refS{sec:mmt}.

\begin{example}
In the case of full binary trees, we obtain
\begin{equation}\label{Fu1a}
  \begin{split}
  F_1(z,u)
&=zu\Phi\bigpar{F_1(z,u)+F_0(z)}
=zu+zu\bigpar{F_0(z)+F_1(z,u)}^2
\\&
=uF_0(z)+2zuF_0(z)F_1(z,u)+zuF_1(z,u)^2,
  \end{split}
\end{equation}
with the explicit solution
\begin{equation}\label{Fu1x}
  \begin{split}
      F_1(z,u)&=\frac{1-\sqrt{1-4zu(zu+F_0(z))}}{2zu}-F_0(z)
\\&
=\frac{1-\sqrt{2u\sqrt{1-4z^2}+1-2u-4z^2u^2}}{2zu}-F_0(z).
  \end{split}
\end{equation}

By the well-known relationship between derivatives of probability generating
functions and factorial moments \cite[p.~158]{Flajolet2009}, we have
\begin{equation}\label{fact}
    \E{(X_{n})_{r}}
    =
    \left.
    \frac{d^{r}}{d u^{r}}
    \left( 
    \frac{\ai_{n}(u)}{\ai_{n}}
    \right)
    \right|_{u=1}
    =
    \frac{
        [z^{n}]
        \left.
            {\partial^{r}F_{1}(z,u)}/{\partial u^{r}}
        \right|_{u=1}
    }{
        [z^{n}]
        F_{1}(z)
    }
    ,
\end{equation}
where \( (x)_{r} \coloneqq x(x-1)\cdots(x-r+1)\).
It is not difficult to use induction and
singularity analysis of the partial derivatives
$\partial^{r} F_{1}(z,u)/\partial u^{r}\big|_{u=1}$
to show that
\begin{equation}\label{part}
    [z^{n}]
    \left.
    \frac{\partial^{r} F(z,u)}{\partial u^{r}} \right|_{u=1}
    =
    \left( 
        1 + O\left( n^{-1} \right)
    \right)
    \lambda_{1}
    \left( 
    \frac{2}{3}
    \right)^{r}
    n^{r-\frac{3}{2}}
    \rho_{1}^{-n}
    ,
\end{equation}
where \(\lambda_{1}\) is as in \eqref{eq:a:n:1}.
Thus it follows from the estimation of \(\ai_n\) in \eqref{eq:a:n:1} that
\begin{align}
    \E (X_{n})_{r}
    =
    \left( 
        1 + O\left( n^{-1} \right)
    \right)
    \left( 
    \frac{2n}{3}
    \right)^{r}
    .
    \label{eq:e:Sn}
\end{align}
Computing one more terms for \(\e{(X_{n})_{2}}\) yields, 
omitting the calculations,
\begin{equation}
    \label{varS}
    \Var{X_{n}} \sim
    \frac{1+\sqrt{3}}{9}
    n
    \eqqcolon
    \gssx n
    .
\end{equation}
\end{example}

The asymptotic estimates \eqref{eq:e:Sn} and \eqref{varS}  suggest, but are
not enough to conclude, 
asymptotic normality.
However, we can apply the following general theorem.
\begin{theorem}
    \label{Tsize}
Assume \(\Phi(z)\) satisfies the conditions of \refT{Tmom2}. Let \(\rho_{1},
s_{1}\) and \(F_{0}(z)\) be as in \refS{sec:mmt:proof}. Let
\begin{equation}
    d_{1} \coloneqq F_{0}'(\rho_{1}),
    \quad
    d_{2} \coloneqq F_{0}''(\rho_{1}),
    \quad
    d_{3} \coloneqq \Phi''(s_{1} + F_{0}(\rho_{1})),
    \label{hddd}
\end{equation}
and
\begin{align}
    \mux 
    \coloneqq
    \frac{
        s_{1}
    }{s_{1}+\rho_{1}d_{1}}
    ,
    \quad
    \gssx
    \coloneqq
    \frac 
    {
        \rho_{1}  \left( d_{2} d_{3} \rho_{1} s_{1}^{2}
            +
            d_{1}^{2} d_{3} \rho_{1} s_{1}
            +
            d_{1} d_{3}  s_{1}^{2}
            -
            d_{1}^{2} \right) 
    }{ 
        \left( d_{1}\rho_{1} +s_{1} \right)^{3}d_{3}
    }
    .
    \label{mu:sigma}
\end{align}
Then \(0 < \mux < 1 \) and
\begin{equation}\label{EVX}
    \e{X_{n}}= \mux n+O(1),
    \qquad
    \Var{X_{n}} =\gssx n+O(1).
\end{equation}
Furthermore, as \ntoo,
\begin{equation}
    \frac{X_{n}-\mux n}{\sqrt{n}}
    \dto
    N(0, \gssx)
    ,
    \label{CLT:Sn:01}
    \end{equation}
and if \(\gssx \ne 0\),
then also
\begin{equation}
    \frac{X_{n}-\e{X_{n}}}{\sqrt{\Var{X_n}}}
    \dto
    N(0, 1)
    .
    \label{CLT:Sn:1}
    \end{equation}
\end{theorem}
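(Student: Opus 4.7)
The approach is a bivariate upgrade of the singularity analysis carried out in \refSS{sec:mmt:proof}, followed by Hwang's quasi-powers theorem. The key object is the bivariate generating function $F_1(z,u)$ of \eqref{Fw1}, which by the same argument as for \eqref{eqFms} with $m=1$ satisfies the implicit equation
\begin{equation}
F_1(z,u) = zu\,\Phi\bigpar{F_0(z) + F_1(z,u)}.
\end{equation}
At $u=1$ this is exactly \eqref{FG} for $m=1$, so the whole analysis of \refSS{sec:mmt:proof} applies and yields the square-root singularity $\rho_1$ with $s_1 = F_1(\rho_1)$.

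The first step is to extend this singular picture perturbatively in $u$. For $u$ in a complex neighborhood of $1$, the dominant singularity $\rho_1(u)$ of $z\mapsto F_1(z,u)$ and the value $s_1(u):=\lim_{z\upto\rho_1(u)}F_1(z,u)$ are characterized by
\begin{align}
s_1(u) &= \rho_1(u)\,u\,\Phi\bigpar{F_0(\rho_1(u)) + s_1(u)}, \\
\rho_1(u)\,u\,\Phi'\bigpar{F_0(\rho_1(u)) + s_1(u)} &= 1,
\end{align}
which is the parametrised version of \eqref{eqgam0}--\eqref{eqgam'}. The Jacobian of this system with respect to $(\rho,s)$ at $u=1$ is non-singular: the crucial non-degeneracy reduces to $d_3=\Phi''(s_1+F_0(\rho_1))>0$, which holds strictly because $s_1+F_0(\rho_1)<R$ by \eqref{zebra} and because \eqref{philmx} forces $\Phi'$ to be strictly increasing on the relevant subinterval. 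The analytic implicit function theorem then gives analytic branches $\rho_1(u)$, $s_1(u)$ near $u=1$.

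The second step is to apply the uniform algebraic-singularity schema (Theorem IX.12 of \cite{Flajolet2009}, a direct parametrised version of Theorem VII.3 used in \refSS{sec:mmt:proof}) to obtain, uniformly for $u$ in a neighborhood of $1$,
\begin{equation}
[z^n] F_1(z,u) = \lambda(u)\,\rho_1(u)^{-n}\,n^{-3/2}\,\bigpar{1+O(n\qw)},
\end{equation}
with $\lambda(u)$ analytic and positive at $u=1$. Dividing by $[z^n]F_1(z,1)$ as in \eqref{eq:mmt1} yields
\begin{equation}
\E u^{X_n} = \frac{\lambda(u)}{\lambda(1)}\lrpar{\frac{\rho_1(1)}{\rho_1(u)}}^{n}\bigpar{1+O(n\qw)},
\end{equation}
which is the quasi-powers form. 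Hwang's quasi-powers theorem (Theorem IX.8 of \cite{Flajolet2009}) then delivers \eqref{EVX} and the central limit law \eqref{CLT:Sn:01}, with
\begin{equation}
\mux = -\frac{\rho_1'(1)}{\rho_1(1)}, \qquad \gssx = -\frac{\rho_1''(1)}{\rho_1(1)} + \mux - \mux^{2}.
\end{equation}
The statement \eqref{CLT:Sn:1} then follows from \eqref{CLT:Sn:01} and \eqref{EVX} by Slutsky's theorem whenever $\gssx>0$.

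The final step is to compute $\rho_1'(1)$ and $\rho_1''(1)$ by implicit differentiation of the two-equation system above, expressing the answer in terms of $\rho_1$, $s_1$, $d_1=F_0'(\rho_1)$, $d_2=F_0''(\rho_1)$, and $d_3=\Phi''(s_1+F_0(\rho_1))$; matching the result with \eqref{mu:sigma} gives the claimed closed forms, and the inequality $0<\mux<1$ is then immediate from $\rho_1,s_1,d_1>0$. The main obstacle is the algebraic bookkeeping in this last step: one must differentiate the system once to get $\rho_1'(1)$ (yielding $\mux$ in its compact form) and a second time to reduce the variance combination to the concise expression in \eqref{mu:sigma}. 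Everything else is a routine uniform-in-$u$ upgrade of the argument already executed at $u=1$ in \refSS{sec:mmt:proof}.
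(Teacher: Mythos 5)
Your proposal takes essentially the same route as the paper: the paper's proof simply invokes Drmota's packaged Theorem~2.23 from \emph{Random Trees}, with \(\tdF(x,y,u)=xu\Phi(y+F_0(x))\), and verifies the two non-degeneracy conditions \eqref{tdF}; what you have done is unpack exactly what that theorem does internally (analytic continuation of the moving square-root singularity \(\rho_1(u)\), uniform singularity transfer, and Hwang's quasi-powers theorem). So this is the same argument, just spelled out rather than cited.

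Two minor slips, neither fatal to the approach. First, your stated formula for the variance coefficient has a sign error: with \(B(u)=\rho_1(1)/\rho_1(u)\) and Hwang's variability functional \(\mathfrak{v}(B)=B''(1)+B'(1)-B'(1)^2\), one gets
\begin{equation}
\gssx \;=\; -\frac{\rho_1''(1)}{\rho_1(1)} + \mux + \mux^2,
\end{equation}
not \(-\rho_1''(1)/\rho_1(1)+\mux-\mux^2\); your version would not actually match \eqref{mu:sigma} after the final bookkeeping. Second, the justification you give for \(d_3>0\) is slightly off: it is not \eqref{philmx} (\(\Phi'(R)=\infty\)) that matters, but \eqref{philm}, which rules out an affine \(\Phi\); then, since \(\Phi\) has non-negative coefficients and is not affine, \(\Phi''(z)>0\) for all \(z\in(0,R)\) — this is exactly the paper's verification in \eqref{tdF2}. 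You also implicitly need the other half of the Jacobian, \(\partial\tdF/\partial x\neq0\), which is the paper's \eqref{tdF1} and is automatic by positivity; you fold it into the Jacobian being non-singular, which is fine as long as you notice it.
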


\begin{proof}
Let \(\tdF(x, y, u) \coloneqq x u \Phi(y + F_{0}(x))\).
Then most conditions of  \cite[Theorem 2.23]{drmota2009random} are satsifed
by the arguments in
\refS{sec:mmt:proof}, in particular \eqref{eqgam'}.
We only need to verify that
\begin{equation}
    \label{tdF}
    \frac{
        \partial \tdF
    }{
        \partial x
    }
    \left( 
        \rho_{1},
        s_{1},
        1
    \right)
    \ne
    0
    ,
    \qquad
    \frac{
        \partial^2 \tdF
    }{
        \partial y^2
    }
    \left( 
        \rho_{1},
        s_{1},
        1
    \right)
    \ne
    0
    ,
\end{equation}
where \(\rho_{1}\) and \(s_{1}\) are as in
\refS{sec:mmt:proof}.
The first inequality holds since
\begin{equation}
    \frac{
        \partial \tdF
    }{
        \partial x
    }
    \left( 
        \rho_{1},
        s_{1},
        1
    \right)
    \ge
    \Phi\left( 
        s_{1}
        +
        F_{0}(\rho_1)
    \right)
    =
    \frac{
        s_{1}
    }{
        \rho_{1}
    }
    >
    0
    .
    \label{tdF1}
\end{equation}
By condition \eqref{philm1}, \(\Phi''(z) > 0\) for all \(z > 0\).
Then the second inequality of \eqref{tdF} holds since
\begin{equation}
    \frac{
        \partial^2 \tdF
    }{
        \partial y^2
    }
    \left( 
        \rho_{1},
        s_{1},
        1
    \right)
    =
    \rho_{1}
    \Phi''\left( s_{1} + F_{0}(\rho_1) \right)
    >
    0
    .
    \label{tdF2}
\end{equation}

Thus \cite[Theorem 2.23]{drmota2009random} applies,
which yields \eqref{EVX} and \eqref{CLT:Sn:1}, with 
\eqref{CLT:Sn:01} as an immediate consequence;
it also gives formulas for \(\mux\) and \(\gssx\), which after some
calculations yield \eqref{mu:sigma}.
\end{proof}

\begin{example}
    In the case of full binary trees, it is easy to verify with Maple that,
see also \eqref{rho1a},
    \begin{equation}
        \rho_{1}
        =
        \frac{1}{2}\sqrt {2\sqrt {3}-3}
        ,
        \qquad
        s_{1}
        =
            \frac{
                \sqrt{
                    2
                }
            }{
                \sqrt[4]{3}
            }
        ,
    \end{equation}
    and
    \begin{equation}
        d_{1}
        =
        1+\frac{\sqrt{3}}{3}
        ,
        \qquad
        d_{2}
        =
        \frac{1}{3} \sqrt{144 + 86 \sqrt{3}}
        ,
        \qquad
        d_{3}
        =2
        .
        \label{ddd2}
    \end{equation}
    Plugging these numbers into \eqref{mu:sigma}, we recover 
\eqref{eq:e:Sn} ($r=1$) and \eqref{varS}, and also
\begin{equation}
    \frac{X_{n}-2n/3}{\sqrt{n}}
    \dto
    N\Bigpar{0, \frac{1+\sqrt3}{9}}    
    .
    \label{CLT:Sn}
\end{equation}
\end{example}


\newcommand\AAP{\emph{Adv. Appl. Probab.} }
\newcommand\JAP{\emph{J. Appl. Probab.} }
\newcommand\JAMS{\emph{J. \AMS} }
\newcommand\MAMS{\emph{Memoirs \AMS} }
\newcommand\PAMS{\emph{Proc. \AMS} }
\newcommand\TAMS{\emph{Trans. \AMS} }
\newcommand\AnnMS{\emph{Ann. Math. Statist.} }
\newcommand\AnnPr{\emph{Ann. Probab.} }
\newcommand\CPC{\emph{Combin. Probab. Comput.} }
\newcommand\JMAA{\emph{J. Math. Anal. Appl.} }
\newcommand\RSA{\emph{Random Struct. Alg.} }
\newcommand\ZW{\emph{Z. Wahrsch. Verw. Gebiete} }
\newcommand\DMTCS{\jour{Discr. Math. Theor. Comput. Sci.} }

\newcommand\AMS{Amer. Math. Soc.}
\newcommand\Springer{Springer-Verlag}
\newcommand\Wiley{Wiley}

\newcommand\vol{\textbf}
\newcommand\jour{\emph}
\newcommand\book{\emph}
\newcommand\inbook{\emph}
\def\no#1#2,{\unskip#2, no. #1,} 
\newcommand\toappear{\unskip, to appear}

\newcommand\arxiv[1]{\texttt{arXiv:#1}}
\newcommand\arXiv{\arxiv}

\def\nobibitem#1\par{}

\end{document}